\documentclass[12pt]{article}

\usepackage[inner=32mm,outer=31mm,tmargin=30mm,bmargin=40mm]{geometry}
\usepackage[ngerman,english]{babel}

\usepackage{latexsym,amsfonts,amsmath,amssymb,epsfig,tabularx,amsthm,dsfont,mathrsfs}

\usepackage{graphicx}
\usepackage{enumerate}

\usepackage{booktabs}
\usepackage{titling}
\newcommand{\subtitle}[1]{%
  \posttitle{%
    \par\end{center}
    \begin{center}\large#1\end{center}
    \vskip0.5em}%
}

\usepackage{hyperref} 
\hypersetup{colorlinks=true,
        linkcolor=black,
        citecolor=black,
        urlcolor=blue}

\usepackage{pgfplots}

\usepackage{framed}
\usepackage{amscd}

\usepackage{tikz-cd}
\usepackage{tikz}
\usetikzlibrary{calc,intersections,patterns}

\usepackage{caption}
\theoremstyle{plain}

\newtheorem{theorem}{Theorem}[section]
\newtheorem{lemma}[theorem]{Lemma}

\newtheorem{corollary}[theorem]{Corollary}

\theoremstyle{definition}
\newtheorem{remark}[theorem]{Remark}

\renewcommand{\P}{{\mathbb P}}
\newcommand{\expect}{\operatorname{\mathbb{E}}}

\DeclareMathOperator{\Uniform}{unif}
\DeclareMathOperator{\Normal}{\mathcal{N}}

\newcommand{\ind}{\mathds{1}}

\DeclareMathOperator{\id}{id}

\DeclareMathOperator{\card}{card}
\newcommand{\ran}{\textup{ran}}

\newcommand{\nonada}{\textup{nonada}}

\newcommand{\eps}{\varepsilon}
\newcommand{\embed}{\hookrightarrow}

\renewcommand{\vec}{\boldsymbol}

\newcommand{\R}{{\mathbb R}}

\newcommand{\N}{{\mathbb N}}

\DeclareMathAlphabet{\mathup}{OT1}{\familydefault}{m}{n}

\newcommand{\widebar}[1]{\mbox{\kern1.5pt\hbox{\vbox{\hrule height 0.6pt \kern0.35ex
        \hbox{\kern-0.15em \ensuremath{#1 }\kern0.0em}}}}\kern-0.1pt}

\newlength{\fixboxwidth}
\usepackage{soul}

\usepackage{latexsym,amsfonts,amsmath,amssymb,mathrsfs}
\usepackage{url}
\usepackage{graphicx}
\usepackage{color}
\usepackage{euscript}
\usepackage{verbatim}
\usepackage{hyperref}

\definecolor{darkgreen}{rgb}{0,0.5,0}

\begin{document}

\title{Sharp bounds for non-adaptive randomized approximation of high-dimensional noisy vectors}

\author{Robert J. Kunsch\thanks{RWTH Aachen University,
Mathematics for Uncertainty Quantification,
Pontdriesch~10--16,
52062 Aachen, Email: kunsch@mathc.rwth-aachen.de},
Marcin Wnuk\thanks{Fakultät für Informatik und Mathematik, 
Passau University, Instraße 33, 94032 Passau, 
Email: marcin.wnuk@uni-passau.de}}

\date{\today}

\maketitle
\begin{abstract}
    We study the complexity of approximating the finite-dimensional
    vector space embedding $\ell_p^m \embed \ell_q^m$ for $2 \leq p < q \leq \infty$
    based on non-adaptive randomized algorithms
    that use up to $n$ arbitrary linear functionals
    as information on a problem instance $\vec{x} \in \R^m$, where $n \ll m$.
    We prove lower bounds on the non-adaptive randomized approximation error
    with a joint dependence on~$(n,m)$
    matching previously known upper bounds.
\end{abstract}

{\bf Keywords: } Monte Carlo, information-based complexity, lower bounds,
linear information

\section{Introduction}
We investigate the approximation of the embedding $\ell_p^m \hookrightarrow \ell_q^m$ 
for $2 \leq p < q \leq \infty$ by means of non-adaptive randomized algorithms,
where $\ell_p^m$ is $\R^m$ equipped with the $\ell_p$-norm.
The article is a follow-up to the papers~\cite{KNW24, KW24b, KW24c}, yet self-contained.
The problem of approximating the embeddings of finite-dimensional spaces is of interest,
mainly due to two factors: 
on the one hand, approximating those embeddings is a building block 
for approximating more complicated function space embeddings; see, e.g.~\cite{H92,Ma91}. 
On the other hand, finite-dimensional embeddings constitute a `minimal working example' 
where one can observe interesting theoretical phenomena, 
such as the fact that randomized adaptive algorithms can be much better 
than randomized non-adaptive algorithms~\cite{KNW24, KW24b, KW24c}.

We start by describing admissible algorithms.
A \emph{deterministic non-adaptive algorithm} for the problem $\ell_p^m \to \ell_q^m$ 
is any mapping of the form
\begin{equation*}
    A = \varphi \circ N,
\end{equation*}
where
\begin{itemize}
    \item $N: \ell^m_p \rightarrow \mathbb{R}^n$ is a linear mapping called \emph{information mapping},
        and
    \item $\varphi: \mathbb{R}^n \rightarrow \ell_q^m$, 
        called the \emph{reconstruction mapping}, 
        can be any mapping.
\end{itemize}
Thus, $A(\vec{x})$ first applies $n$ linear functionals to $\vec{x}$ (it `collects information')
and then transforms this information in some way to obtain the output.
One calls~$n$ the \emph{cardinality (or information cost)} of $A$ and writes
\begin{equation*}
    \card A = n.
\end{equation*}
A \emph{randomized non-adaptive algorithm} is a collection $A = (A^{\omega})_{\omega \in \Omega}$ 
of deterministic non-adaptive algorithms indexed by the elements of the sample space 
of some probability space $(\Omega, \Sigma, \mathbb{P})$,
where the mapping $\Omega \ni \omega \mapsto \card A^{\omega}$ is a random variable
and, for a fixed $\vec{x} \in \ell^m_p$,
the mapping $\Omega \ni \omega \mapsto A^{\omega}(\vec{x})$
is also a random variable, usually written as $A(\vec{x})$.
For technical reasons, we also assume that the mapping $(\omega,\vec{x}) \mapsto A^\omega(\vec{x})$
is measurable with respect to the Borel-$\sigma$-algebra on $\R^m$ 
and the product $\sigma$-algebra on $\Omega \times \R^m$.
We regard deterministic non-adaptive algorithms as a special instance of randomized algorithms,
in particular, we assume Borel-measurability.
Since in the rest of the paper we deal mainly with non-adaptive algorithms,
we will often simply speak of algorithms.

Given a randomized algorithm $A$, we define its \emph{error}
when approximating the embedding $\ell^m_p \hookrightarrow \ell^m_q$ as
\begin{equation*}
    e(A, \ell^m_p \hookrightarrow \ell^m_q)
        := \sup_{\vec{x}\colon \| \vec{x} \|_p \leq 1} \expect \| \vec{x} - A(\vec{x}) \|_q
\end{equation*}
where $\|\cdot\|_p$ is the classical $\ell_p$-norm.
The cardinality of a randomized algorithm is given by the worst case cost,
\begin{equation*}
    \card A := \sup_{\omega \in \Omega} \card A^{\omega}.
\end{equation*}
We are interested in the \emph{$n$-th minimal error} 
\begin{equation*}
    e^{\ran, \nonada}(n, \ell^m_p \hookrightarrow \ell^m_q) 
        := \inf_{A_n} e(A_n, \ell^m_p \hookrightarrow \ell^m_q),
\end{equation*}
where the infimum is taken over all randomized algorithms $A_n$ with cardinality at most $n$.
For $\eps > 0$, we also consider the dual notion of the \emph{$\eps$-complexity},
\begin{equation*}
    n^{\ran,\nonada}(\eps, \ell^m_p \hookrightarrow \ell^m_q) 
        := \inf\{ \card A \colon  e(A, \ell^m_p \hookrightarrow \ell^m_q) \leq \eps\},
\end{equation*}
where the infimum is taken over randomized non-adaptive algorithms.

In this article,
we focus on the situation where the dimension of the problem $m$
is much larger than the cardinality $n$ of algorithms, $n \ll m$.
The main result states that, in the case $2 \leq p < q \leq \infty$, one has
\begin{equation}\label{eq:MainResult}
    e^{\ran, \nonada}(n, \ell^m_p \hookrightarrow \ell^m_q)
        \asymp \min \left\{  1,\,
                        \left( \frac{m^{1-2/p} \cdot \log m}{n} \right)^{\frac{1}{2}(1-p/q)} 
                    \right\},  
\end{equation}
see Theorem~\ref{thm:Main}.
The upper error bounds are already known from~\cite[Thm~4.4]{KW24c}
and were obtained by means of denoising a linear randomized algorithm.
The contribution of this paper is a proof of the matching lower bounds.
For this we use Bakhvalov's trick,
where we switch from the randomized setting
to an average case setting for deterministic algorithms.
The average case setting is studied in Section~\ref{sec:avganalysis}.
The results of Section~\ref{sec:avganalysis}
are used to prove a randomized lower bound in Section~\ref{sec:rannonadaLB}.

Upper bounds are known for the non-adaptive randomized approximation of $\ell_p^m \embed \ell_q^m$ 
for the general parameter range~$1 \leq p < q \leq \infty$, see~\cite{KW24c}.
We conjecture that these algorithms are asymptotically optimal also for $1 \leq p < 2$.
The paper~\cite{KNW24} contains a lower bound for non-adaptive randomized approximation
which holds for the whole parameter range of $p$ and $q$, in particular $\ell_1^m \embed \ell_\infty^m$,
but it only shows that the error will be larger than a constant $\eps_0 = \frac{1}{100}$
if $n$ is too small compared to $m$.
This paper now proves lower bounds with joint dependence on $m$ and~$n$,
but only for $p \geq 2$.
For $p < 2$ the suitable average case setting appears to be more difficult to analyse.
We note that from the information-based complexity point of view,
the approximation problem is much harder for $p > 2$ because the error rates and complexity
exhibit a factor that is polynomially large in~$m$
while for \mbox{$p \in [1,2]$} the dependence on $m$ is only logarithmic.
Upper bounds are also known for the adaptive randomized setting
where information functionals may be chosen depending on previous measurements,
see~\cite{KW24b,KW24c}.
In these bounds, instead of a $\log m$-dependence,
we find a $\log \log m$-dependence;
still, for $p > 2$ a polynomial dependence on $m$~remains.
Proving lower bounds for the adaptive setting is yet another challenge, especially for $p < 2$.

A short note why we call the vectors from the unit ball of~$\ell_p^m$ with $p > 2$ `noisy vectors':
Randomized algorithms for $\ell_p^m \embed \ell_q^m$
aim to recover the largest entries (in absolute value) of~$\vec{x}$.
In randomized measurements,
the smaller entries of~$\vec{x}$ act as noise that scales with the $\ell_2$-norm,
which can be large for $p > 2$,
namely: If~\mbox{$\|\vec{x}\|_p \leq 1$} for $\vec{x} \in \R^m$,
then $\|\vec{x}\|_2 \leq m^{\frac{1}{2} - \frac{1}{p}}$ is a sharp estimate.
This gives an intuitive explanation for the factor $m^{1 - 2/p}$
occuring in the error bounds.
The lower bound proof shows that this factor is indeed unavoidable,
at least if we restrict ourselves to non-adaptive algorithms.

\subsection*{Asymptotic notation}

We use asymptotic notation to compare functions $f$ and $g$ 
that depend on variables $(\eps,m)$ or $(n,m)$, 
writing \mbox{$f \preceq g$} if there exists a constant \mbox{$C > 0$} such that $f \leq C g$ holds.
\emph{Weak asymptotic equivalence} $f \asymp g$ means $f \preceq g \preceq f$.
We often state such asymptotic relations with restrictions on the variables,
e.g., for sufficiently large $m \geq m_0$ and $n \leq m^{\alpha}$ or $\eps \geq m^{-\beta}$, respectively,
meaning that the inequality $f \leq Cg$ only holds for pairs $(\eps,m)$ or $(n,m)$
that satisfy these restrictions.
The implicit constant $C$ may depend on other parameters such as $p$ and $q$.

\section{Average case analysis}
\label{sec:avganalysis}

We use Bakhvalov's technique (also known as Yao's principle)
to prove lower bounds for randomized methods
by switching to an average case setting.
For a (sub-)probability measure $\mu$ on $\ell_p^m$,
the $\mu$-average error for a deterministic algorithm \mbox{$A\colon \ell_p^m \to \ell_q^m$}
is defined as follows:
\begin{equation} \label{eq:emu}
  e^\mu(A,\ell_p^m \embed \ell_q^m)
    := \int \|A(\vec{x}) - \vec{x}\|_q \,d\mu(\vec{x}).
\end{equation}
Given a (sub-)probability measure~$\mu$ with support in the $\ell_p$-unit ball of $\R^m$,
Bakhvalov's technique states
\begin{equation} \label{eq:Bakhvalov}
    e^{\ran,\nonada}(n,\ell_p^m \embed \ell_q^m)
        \geq \inf_{A_n} e^{\mu}(A_n, \ell_p^m \embed \ell_q^m),
\end{equation}
where the infimum for the $\mu$-average error
is taken over non-adaptive deterministic algorithms~$A_n$
that use at most $n$~pieces of linear information.
Note that here, we consider the randomized error for algorithms with strict cardinality bound~$n$.
For the sake of simplicity,
in this paper we ignore randomized algorithms with expected cardinality bound,
but there are variations of Bakhvalov's technique for such settings as well~\cite[Sec~1.3]{Ku17}.

\subsection{Truncated Gaussian mixture}
\label{sec:mu}

The notation introduced in this subsection will be used throughout Section~\ref{sec:avganalysis}.
We start by defining the measure $\mu$ on the $\ell_p$-unit ball in~$\R^m$. 
In addition to $p \in [2,\infty)$, this measure depends on a parameter $k \in \N$ with $k \ll m$.
Let
\begin{equation} \label{eq:W_k}
  W = W_k^m := \{ \vec{w} \in \{-1,0,1\}^m \colon \vec{w}\ \text{with exactly $k$ non-zero entries}\}
\end{equation}
with $M := \# W = \binom{m}{k} 2^k$
and let $W = \{\vec{w}_1,\ldots,\vec{w}_M\}$ be an arbitrary enumeration of the elements of $W$;
we will write $[M] = \{1,\ldots,M\}$ for the index set.
If two elements $\vec{w}_i, \vec{w}_j \in W$ differ in at least $k$ entries,
then they have an $\ell_p$-distance bounded by \mbox{$\|\vec{w}_i - \vec{w}_j\|_p \geq k^{1/p}$}.
Rescaling to $\vec{u}_i := \frac{1}{2} \cdot k^{-1/p} \cdot \vec{w}_i$, 
we then have $\|\vec{u}_i\|_p = \frac{1}{2}$ and 
$\|\vec{u}_i - \vec{u}_j\|_q \geq \frac{1}{2} \cdot k^{-\left(\frac{1}{p} - \frac{1}{q}\right)}$.
Consider the random vector
\begin{equation} \label{eq:Xdef}
  \vec{X} = \vec{u}_I + \sigma \vec{Z}_m\,,
\end{equation}
where $I \sim \Uniform[M]$ and $\vec{Z}_m \sim \Normal(\vec{0},\id_m)$ are independent random variables
and $\sigma = \frac{1}{4\sqrt{p}}\, m^{-1/p} > 0$ is a scaling factor.
The probability measure $\widetilde{\mu} := \P^{\vec{X}}$
is not supported on the $\ell_p$-unit ball of $\R^m$,
so we introduce a truncation by the event
\begin{equation*}
  \Omega' 
    := \left\{\omega \in \Omega\colon \|\sigma\vec{Z}_m(\omega)\|_p 
              \leq \tfrac{1}{2}
       \right\},
\end{equation*}
defining the sub-probability measure
\begin{equation*}
  \mu(\,\cdot\,) := \P\left(\Omega' \cap \{\vec{X} \in \,\cdot\,\}\right).
\end{equation*}
Indeed, on $\Omega'$ we have
\begin{equation*}
  \|\vec{X}\|_p
    = \|\vec{u}_I + \sigma\vec{Z}_m\|_p
    \leq \|\vec{u}_I\|_p + \|\sigma\vec{Z}_m\|_p
    \stackrel{\omega \in \Omega'}{\leq}
      \tfrac{1}{2} + \tfrac{1}{2} = 1.
\end{equation*}
By Lemma~\ref{lem:truncationprob},
our choice of the scaling factor, $\sigma = \frac{1}{4\sqrt{p}}\, m^{-1/p}$,
implies that the truncation probability vanishes in the dimension limit:
\begin{equation} \label{eq:deltalim}
  \delta = \P\left(\|\sigma\vec{Z}_m\|_p > \frac{1}{2}\right)
    < \exp\left(-\frac{m^{2/p}}{\pi}\right)
    \xrightarrow[m\to\infty]{} 0.
\end{equation}
For this truncated measure, 
finding a lower bound for the $\mu$-average error of an algorithm $A \colon \R^m \to \R^m$ 
as defined in~\eqref{eq:emu} 
reduces to the problem of an approximate reconstruction of the random centres $\vec{u}_I$ under noise,
for $\eps > 0$ we have:
\begin{align}
  e^{\mu}(A,\ell_p^m \embed \ell_q^m)
    &= \expect\left[\|A(\vec{X}) - \vec{X}\|_q \cdot \ind_{\Omega'}\right] \nonumber\\
    &\geq \expect\left[\left(\|A(\vec{X}) - \vec{u}_I\|_q - \|\sigma \vec{Z}_m\|_q\right) 
                        \cdot \ind_{\Omega'}
                \right] 
        \nonumber \\
    &\geq \expect\left[\|A(\vec{X}) - \vec{u}_I\|_q \cdot \ind_{\Omega'}\right] 
      - \expect \|\sigma \vec{Z}_m\|_q \nonumber \\
    &\geq \bigl[\P(\|A(\vec{X}) - \vec{u}_I\|_q \geq \eps) - \delta\bigr] \cdot \eps
      - \expect \|\sigma \vec{Z}_m\|_q.
  \label{eq:emuvsmutilde}
\end{align}
We will choose $\eps = \frac{1}{4} k^{-\left(\frac{1}{p} - \frac{1}{q}\right)}$
and show that $\P(\|A(\vec{X}) - \vec{u}_I\|_q \geq \eps) > \frac{1}{16}$.
The term $\expect\|\sigma \vec{Z}_m\|_q$ turns out to be considerably smaller than $\eps$ for $k \ll m$,
eventually proving a $\mu$-average error of order $k^{-\left(\frac{1}{p} - \frac{1}{q}\right)}$,
see Lemma~\ref{lem:avgLB}.

\subsection{Reconstructing the centers from linear information}
\label{sec2.2}

In view of~\eqref{eq:emuvsmutilde}, 
we now focus on the probability of a large reconstruction error $\|A(\vec{X}) - \vec{u}_I\|_q$
for a deterministic algorithm~$A = \varphi \circ N\colon \R^m \to \R^m$ 
with non-adaptive linear information mapping $N\colon \R^m \to \R^n$.
We identify the information mapping with a matrix $N \in \R^{n \times m}$.
Without loss of generality,
the rows of $N$ are orthogonal with $\ell_2$-norm~$\sigma^{-1}$
such that $\vec{Z} := N(\sigma \vec{Z}_m) \sim \Normal(\vec{0},\id_n)$.
Hence,
the information $N(\vec{X})$ in $\R^n$ can be represented as a random vector
\begin{equation} \label{eq:Y}
    \vec{Y} := N(\vec{X}) = \vec{v}_I + \vec{Z}
\end{equation}
with $\vec{v}_i := N\vec{u}_i$ and independent random variables 
$I \sim \Uniform[M]$ and $\vec{Z} \sim \Normal(\vec{0},\id_n)$.

\begin{lemma} \label{lem:identify_I}
    Consider~$\vec{Y}$ as in~\eqref{eq:Y} with $n \geq 16$, and for $\eps > 0$ define
    \begin{equation} \label{eq:K}
      K := \sup_{\vec{x} \in \R^m} \#\{i \in [M] \colon \|\vec{x} - \vec{u}_i\|_q < \eps\}.
    \end{equation}
    Let $R > 0$ and define $\mathcal{I} := \{i \in [M] \colon \|\vec{v}_i\|_2 \leq R \}$.
    Assume that~$R$ satisfies $\# \mathcal{I} \geq \frac{M}{2}$ and $2R^2 \leq \log \frac{M}{8K}$.
    
    Then, for any reconstruction mapping $\varphi\colon \R^n \to \R^m$ one has
    \begin{equation*}
            \P\bigl(\|\varphi(\vec{Y)} - \vec{u}_I\|_q \geq \eps\bigr)
                \geq \frac{1}{16} \,. 
    \end{equation*}
\end{lemma}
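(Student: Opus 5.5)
We first reduce the statement to an estimate on posterior probabilities. For a fixed reconstruction $\varphi$ and an observed value $\vec{y}$, the set of indices $i$ with $\|\varphi(\vec{y}) - \vec{u}_i\|_q < \eps$ has cardinality at most $K$ by~\eqref{eq:K}. Let $J(\vec{y})$ denote this set. Then
\begin{equation*}
  \P\bigl(\|\varphi(\vec{Y}) - \vec{u}_I\|_q < \eps\bigr)
    = \expect\, \P\bigl(I \in J(\vec{Y}) \,\big|\, \vec{Y}\bigr)
    \leq \expect \max_{J \subseteq [M],\, \#J \leq K} \P(I \in J \mid \vec{Y}).
\end{equation*}
It therefore suffices to show that this probability is at most $\frac{15}{16}$. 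By Bayes' rule the posterior is
\begin{equation*}
  \P(I = i \mid \vec{Y} = \vec{y}) = \frac{g_i(\vec{y})}{\sum_j g_j(\vec{y})},
  \qquad g_i(\vec{y}) = \exp\bigl(\langle \vec{y}, \vec{v}_i\rangle - \tfrac12 \|\vec{v}_i\|_2^2\bigr).
\end{equation*}
We split the success event according to whether $I \in \mathcal{I}$. Since $\#\mathcal{I} \geq M/2$, we have $\P(I \notin \mathcal{I}) \leq \frac12$. Hence it is enough to show that $\P(I \in \mathcal{I},\ I \in J(\vec{Y})) \leq \frac{7}{16}$, which bounds the total success probability by $\frac{15}{16}$.

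The key step is to lower bound the denominator $\sum_j g_j(\vec{y})$ using only the indices $j \in \mathcal{I}$, whose $\vec{v}_j$ are short. Conditionally on $I = i$ we write $\vec{y} = \vec{v}_i + \vec{Z}$. For $j \in \mathcal{I}$ we get $g_j(\vec{y}) \geq \exp(\langle \vec{Z}, \vec{v}_j\rangle + \langle \vec{v}_i, \vec{v}_j\rangle - R^2/2)$. A cleaner route avoids this conditioning: we compare directly with the reference density $\phi$ of $\vec{Z}$ alone. The joint density of $(I,\vec{Y})$ at $(i,\vec{y})$ equals $\frac{1}{M}\phi(\vec{y}) g_i(\vec{y})$. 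Therefore
\begin{equation*}
  \P(I \in \mathcal{I} \cap J(\vec{Y}))
    = \int \frac{1}{M}\sum_{i \in \mathcal{I} \cap J(\vec{y})} g_i(\vec{y})\,\phi(\vec{y})\,d\vec{y}.
\end{equation*}
For each $i \in \mathcal{I}$, the likelihood ratio $g_i(\vec{Z}) = \exp(\langle \vec{Z},\vec{v}_i\rangle - \|\vec{v}_i\|^2/2)$ satisfies $\expect g_i(\vec{Z})^2 = \exp(\|\vec{v}_i\|_2^2) \leq e^{R^2}$. By Cauchy--Schwarz, for any event $E$ we then have $\expect[g_i(\vec{Z})\ind_E] \leq e^{R^2/2}\P(E)^{1/2}$. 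We use this through the following estimate. Let $E_i = \{\vec{y}\colon i \in J(\vec{y})\}$. Since each $\vec{y}$ lies in at most $K$ of the sets $E_i$, we get $\sum_i \P(\vec{Z} \in E_i) \leq K$. Summing over $i \in \mathcal{I}$ and applying Cauchy--Schwarz once more gives
\begin{equation*}
  \frac1M \sum_{i\in\mathcal{I}} e^{R^2/2}\P(\vec{Z}\in E_i)^{1/2}
  \leq e^{R^2/2}\Bigl(\frac{1}{M}\sum_i \P(\vec{Z}\in E_i)\Bigr)^{1/2}
  \leq \Bigl(\frac{e^{R^2}K}{M}\Bigr)^{1/2}.
\end{equation*}
The hypothesis $2R^2 \leq \log\frac{M}{8K}$ gives $e^{R^2} \leq (M/8K)^{1/2}$. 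Hence the right-hand side is at most $(K/M)^{1/4}8^{-1/4}$, and this is tiny whenever $K \ll M$.

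The main obstacle is that this is not automatically below $\frac{7}{16}$ if $K/M$ is not small. We argue this case away. If $M/(8K) < 1$, then the hypothesis $2R^2 \leq \log\frac{M}{8K}$ would force $R^2 < 0$, which is impossible; so $M \geq 8K$. Then the bound is at most $(1/64)^{1/4} = 8^{-1/2} \approx 0.354 < \frac{7}{16}$, which closes the argument. If the constants come out too tight, we would replace Cauchy--Schwarz by Hölder with a different exponent, or use the truncation $\ind_{\{g_i > t\}}$. The assumption $n \geq 16$ presumably enters only in the authors' own route, via a chi-square concentration for $\|\vec{Z}\|_2$. In the approach above we do not expect to need it, but we will keep it available in case a concentration step becomes necessary.
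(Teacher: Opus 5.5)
Your argument is correct, and it follows a genuinely different route from the paper's.

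\textbf{Your route.} You change measure to the centred Gaussian, writing the joint density of $(I,\vec{Y})$ as $\frac{1}{M}\phi(\vec{y})\,g_i(\vec{y})$. You then bound $\frac{1}{M}\sum_{i\in\mathcal{I}}\expect[g_i(\vec{Z})\ind_{E_i}(\vec{Z})]$ by two applications of Cauchy--Schwarz. The first uses the second moment $\expect g_i(\vec{Z})^2=e^{\|\vec{v}_i\|_2^2}\le e^{R^2}$, which is a $\chi^2$-divergence bound. The second uses the counting identity $\sum_i\P(\vec{Z}\in E_i)=\expect\,\#J(\vec{Z})\le K$. Combined with $\P(I\notin\mathcal{I})\le\frac12$ and $M>8K$ (which does follow from $R>0$), this bounds the success probability by $\frac12+e^{R^2/2}\sqrt{K/M}\le\frac12+8^{-1/2}<\frac{15}{16}$.

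\textbf{The paper's route.} The paper uses the layer-cake representation of the densities $f_i$ through their super-level balls $L_{i,t}=B_{r(t)}(\vec{v}_i)$. It shows geometrically that, for $i\in\mathcal{I}$, at least half of $L_{i,t}$ lies in $B_{\rho(t)}$ with $\rho(t)^2=r(t)^2+R^2$. It then bounds the total volume of correct detections inside $B_{\rho(t)}$ by $K\,\lambda^n(B_{\rho(t)})$. The ratio $(\rho(t)/r(t))^n\le\frac{M}{8K}$ can only be controlled on levels with $r(t)\ge\sqrt{n}/2$. Discarding the remaining levels costs the factor $\P(\|\vec{Z}\|_2\ge\sqrt{n}/2)\ge\frac12$, and this is exactly where Gaussian norm concentration and the hypothesis $n\ge 16$ enter.

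\textbf{Comparison.} Your proof gains several things over the paper's:
\begin{itemize}
\item it is shorter;
\item it is dimension-free, so $n\ge16$ is indeed superfluous, as you suspected;
\item it gives the better constant $\frac12-\frac{1}{2\sqrt{2}}\approx0.146$ instead of $\frac1{16}$;
\item it works under the weaker hypothesis $R^2\le\log\frac{M}{K}-2\log\frac{16}{7}$, which is implied by the paper's assumption.
\end{itemize}
The paper's volumetric argument uses the Gaussian only through the ball shape of its level sets and the concentration of $\|\vec{Z}\|_2$. It therefore does not need a finite, explicitly computable likelihood-ratio second moment, which makes it easier to transfer to other rotation-invariant noise.

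\textbf{Tidying the write-up.} Drop the unused detours: the maximum of posterior masses over sets with $\#J\le K$, the abandoned conditioning step, and the hedging remark about H\"older. Also state that $\varphi$ must be Borel measurable so that the sets $E_i$ are measurable; the paper's proof assumes this implicitly as well.
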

\begin{proof}
    The random variable $\vec{Y}$ is distributed as a Gaussian mixture. Its density function is given by
    \begin{equation*}
        f_{\vec{Y}} = \frac{1}{M} \sum_{j = 1}^M f_j,
    \end{equation*}
    where $f_j$ is the density of a standard Gaussian random variable 
    centred at $\vec{v}_j$. For $i\in [M]$ we define the super-level sets
    \begin{equation*}
        L_{i,t} := \{\vec{y} \in \R^n \colon f_i(\vec{y}) \geq t\}
            = \left\{\vec{y} \in \R^n \colon 
                    \frac{1}{(2\pi)^{n/2}} \cdot \exp\left( - \frac{\|\vec{y} - \vec{v}_i\|^2}{2}\right) 
                        \geq t
                \right\} .
    \end{equation*}
    We extend this definition to $i=0$ with $\vec{v}_i := \vec{0}$ 
    and write $L_t := L_{0,t}$ for the super-level sets of a standard Gaussian random variable
    centred around the origin.
    For $t \in \left(0,(2\pi)^{-n/2}\right)$, 
    these super-level sets are Euclidean balls $B_{r(t)}(\vec{v}_i)$ 
    of radius $r(t) := \sqrt{-2\log\left((2\pi)^{n/2} t\right) }$ 
    centred around $\vec{v}_i$.
    For any measurable $D \subset \R^n$ 
    we can represent the following probability by an integral 
    over level sets of the probability density~$f_i$:
    \begin{align}
         \P(\vec{Y} \in D \,\wedge \, I = i)
             &= \frac{1}{M} \int_D f_{i}(\vec{y}) \, d\vec{y} 
                \nonumber\\
            &= \frac{1}{M} \int_{\R^n} \ind[\vec{y} \in D] 
                    \int_0^\infty \ind[f_i(\vec{y}) \geq t] \,dt \, d\vec{y} 
                \nonumber\\
            &= \frac{1}{M} \int_0^\infty \int_{\R^n} \ind_{D \cap L_{i,t}}(\vec{y}) \,d\vec{y} \,dt 
                \nonumber\\
            & = \frac{1}{M}\int_0^{\infty} \lambda^n( D  \cap L_{i,t} ) \, dt.
            \label{eq:YinD|i}
    \end{align}
    We define the regions of approximate detection of $\vec{u}_i$ by the reconstruction~$\varphi$,
    \begin{equation*}
        D_i := \{\vec{y} \in \R^n\colon \|\varphi(\vec{y}) - \vec{u}_i\|_q < \eps\},
    \end{equation*}
    The definition of $D_i$ and $K$ implies that, for all~$\vec{y} \in \R^n$,
    \begin{equation} \label{eq:yinDi}
        \#\{i \in [M]\colon \vec{y} \in D_i\} \leq K.
    \end{equation}
    We use these sets~$D_i$ to represent the probability of a bad approximation
    in the style of~\eqref{eq:YinD|i}:
    \begin{align}
        \P\bigl( \|\varphi(\vec{Y}) - \vec{u}_I\|_q \geq \eps \bigr)
            &= \frac{1}{M}\sum_{i = 1}^M \int_{0}^{\infty} \lambda^n(D_i^c \cap L_{i,t}) \, dt \nonumber\\
            &\geq \frac{1}{M} \sum_{i \in \mathcal{I}} \int_{0}^{\infty} 
                    \lambda^n(D_i^c \cap L_{i,t}) \, dt .
        \label{eq:P(badapprox)assum}
    \end{align}
    As a short-hand notation, we write $B_{\rho} := B_{\rho}(\vec{0}) \subset \R^n$.
    Let $V_n := \lambda^n(B_1)$ denote the volume of the $n$-dimensional Euclidean unit ball.
    If we set the radius to \mbox{$\rho(t) := \sqrt{r(t)^2 + R^2}$},
    then for $i \in \mathcal{I}$ the ball $B_{\rho(t)}$ 
    covers at least half of the volume of the super-level set $L_{i,t}$,
    see Figure~\ref{fig:ball-intersection}. 
    In detail,
    considering the half space
    \begin{equation*}
        H_i := \{\vec{y} \in \R^n \colon 
                    \langle \vec{y},\vec{v}_i \rangle \leq \langle\vec{v}_i,\vec{v}_i\rangle
                \},
    \end{equation*}
    we find
    \begin{equation} \label{eq:vol(LcapBrho)}
        \lambda^n(L_{i,t} \cap B_{\rho(t)})
            \geq \lambda^n\left(L_{i,t} \cap H_i\right)
            = \frac{1}{2} \, \lambda^n(L_{i,t}) = \frac{1}{2} \, r(t)^n \, V_n .
    \end{equation}
    Indeed: Elements $\vec{y} \in L_{i,t} \cap H_i$ can be written as orthogonal decomposition
    \mbox{$\vec{y} = \vec{u} + \vec{w}$}
    with 
    \mbox{$\vec{u} = \frac{\langle\vec{y},\vec{v}_i\rangle}{\langle\vec{v}_i,\vec{v}_i\rangle} \vec{v}_i$}.
    In the case of 
    \mbox{$\frac{\langle\vec{y},\vec{v}_i\rangle}{\langle\vec{v}_i,\vec{v}_i\rangle} \leq 0$}
    we have the estimate 
    \mbox{$\|\vec{u}\|_2 \leq \|\vec{u}\|_2 + \|\vec{v}_i\|_2 = \|\vec{u} - \vec{v}_i\|_2$}.
    Otherwise, since \mbox{$\vec{y} \in H_i$}, we have 
    \mbox{$\frac{\langle\vec{y},\vec{v}_i\rangle}{\langle\vec{v}_i,\vec{v}_i\rangle} \in [0,1]$}
    which implies \mbox{$\|\vec{u}\|_2 \leq \|\vec{v}_i\|_2$}.
    Therefore,
    \begin{align*}
        \|\vec{y}\|_2^2 
            &= \|\vec{u}\|_2^2 + \|\vec{w}\|_2^2 \\
            &\leq \max\{ \|\vec{v}_i\|_2^2, \|\vec{u} - \vec{v}_i\|_2^2 \} + \|\vec{w}\|_2^2 \\
            &\leq \|\vec{v}_i\|_2^2 + \|\vec{u} - \vec{v}_i\|_2^2 + \|\vec{w}\|_2^2 \\
            &= \|\vec{v}_i\|_2^2 + \|\vec{y} - \vec{v}_i\|_2^2 \\
            &\leq R^2 + r(t)^2 = \rho(t)^2,
    \end{align*}
    which shows $L_{i,t} \cap H_i \subset L_{i,t} \cap B_{\rho(t)}$.
    \begin{figure}
        \centering
        \begin{tikzpicture}[scale=0.6]
            
          \coordinate (O) at (0,0);
          \coordinate (V) at (3,0);
          \coordinate (Vtop) at ($(V)+(0,4)$);
          \coordinate (Vbot) at ($(V)+(0,-4)$);
          \coordinate (diag) at ($(O)!1!(Vtop)$); 
        
          \begin{scope}
            \clip (O) circle (5);
            \fill[black,opacity=0.1] (V) circle (4);
          \end{scope}
          \draw[thick, red] (O) circle (5);
          \draw[thick, blue] (V) circle (4);
        
          \node at (O) {$\bullet$};
          \node at (V) {$\bullet$};
          \node[below left] at (O) {$\vec{0}$};
          \node[below right] at (V) {$\vec{v}$};
        
          \draw[thick] (O) -- (V) node[midway,below] {$R$};
        
          \draw[thick, blue] (V) -- (Vtop) node[midway,right] {$r$};
        
          \draw[dashed] (V) -- (Vbot);

          \draw[thick] ($(V)+(-0.35,0)$) -- ($(V)+(-0.35,0.35)$) -- ($(V)+(0,0.35)$);
        
          \draw[thick, red] (O) -- (diag) node[midway,above left] {$\rho = \sqrt{r^2 + R^2}$};
        
        \end{tikzpicture}
        \caption{More than half the volume of $B_r(\vec{v})$ 
        lies inside of $B_{\rho}(\vec{0})$ if $\|\vec{v}\|_2 \leq R$ and $\rho = \sqrt{r^2 + R^2}$. 
        The figure depicts the extreme case of $\|\vec{v}\|_2 = R$.}
        \label{fig:ball-intersection}
    \end{figure}
    Hence, \eqref{eq:vol(LcapBrho)} follows.
    
    Now we can estimate the sum that occurs in~\eqref{eq:P(badapprox)assum}:
    \begin{align}
        \sum_{i \in \mathcal{I}} \lambda^n(D_i^c \cap  L_{i,t})
            &\geq \sum_{i \in \mathcal{I}} \lambda^n(D_i^c \cap  L_{i,t} \cap B_{\rho(t)}) \nonumber\\
            &= \sum_{i \in \mathcal{I}} \lambda^n(L_{i,t} \cap B_{\rho(t)})
                - \sum_{i \in \mathcal{I}} \lambda^n(D_i \cap L_{i,t} \cap B_{\rho(t)}) \nonumber\\
        [\eqref{eq:yinDi}\text{ and } \eqref{eq:vol(LcapBrho)}]\qquad
            &\geq 
                \sum_{i \in \mathcal{I}} \frac{1}{2} \lambda^n(L_{i,t})
                    - K \cdot \lambda^n(B_{\rho(t)}) \nonumber\\
            &= \left(\frac{\# \mathcal{I}}{2} \cdot r(t)^n - K \cdot \rho(t)^n\right) V_n \nonumber\\
            &\geq \left(\frac{M}{4} \cdot r(t)^n - K \cdot \rho(t)^n\right) V_n\,. 
            \label{eq:Mrpow-Krhopow}
    \end{align}
    For $r(t) \geq \frac{\sqrt{n}}{2}$, 
    which is equivalent to $0 < t \leq T_n := (2\pi)^{-n/2} \cdot \exp\left(-\frac{n}{8}\right)$, 
    and using the assumption $2R^2 \leq \log\frac{M}{8K}$, we get
    \begin{align*}
        \frac{\rho(t)}{r(t)}
            &= \sqrt{1 + \frac{R^2}{r(t)^2}}
            \leq 1 + \frac{R^2}{2r(t)^2}
            \leq 1 + \frac{2R^2}{n}
            \leq 1 + \frac{\log \frac{M}{8K}}{n} \\
            &\leq \exp\left(\frac{\log \frac{M}{8K}}{n}\right)
            = \left(\frac{M}{8K}\right)^{1/n}.
    \end{align*}
    In that case, \eqref{eq:Mrpow-Krhopow} simplifies to
    \begin{equation*}
        \sum_{i \in \mathcal{I}} \lambda^n(D_i^c \cap  L_{i,t})
            \geq \frac{M}{8} \cdot r(t)^n \cdot V_n \,.
    \end{equation*}
    We finally find the desired lower estimate for the probability of a bad approximation
    as represented in~\eqref{eq:P(badapprox)assum}:
    \begin{align*}
        \P\bigl( \|\varphi(\vec{Y}) - \vec{u}_I\|_q \geq \eps \bigr)
            &\geq \frac{1}{M}\sum_{i \in \mathcal{I}} \int_{0}^{\infty} 
                    \lambda^n(D_i^c \cap L_{i,t}) \,dt \\
            &\geq \frac{1}{8} \int_0^{\infty} r(t)^n \, V_n \,dt \\
            &\geq \frac{1}{8} \int_0^{T_n} \left(r(t)^n - r(T_n)^n\right) V_n \, dt \\
        \left[r(T_n) = \tfrac{\sqrt{n}}{2}\right]\qquad
            &= \frac{1}{8} \int_0^{T_n} 
                    \lambda^n\left(
                        \left\{\vec{z}\in\R^n \colon \|\vec{z}\|_2 \geq \frac{\sqrt{n}}{2}
                        \right\} \cap L_{t}
                        \right) \, dt\\
        \left[\vec{Z} \sim \Normal(\vec{0},\id_n)\right]\qquad
            &= \frac{1}{8} \cdot \P\left(\|\vec{Z}\|_2 \geq \frac{\sqrt{n}}{2}\right) \\
            &\stackrel{\eqref{eq:P(|Z|>sqrt(n)/2}}{\geq}
                \frac{1}{16}\,.
    \end{align*}
    This finishes the proof.
\end{proof}

\subsection{Lower bound for the average case error}

\begin{lemma} \label{lem:avgLB}
    Let $2 \leq p < q \leq \infty$. Let $m,k \in \N$ with $m \geq 4^p$ and
    \begin{equation*}
        10 \leq k \leq \begin{cases}
                    c_{p,q} \cdot m
                        &\text{for } q < \infty, \\
                    c_{p,\infty} \cdot \frac{m}{(\log m)^{p/2}}
                        &\text{for } q = \infty,
                \end{cases}
    \end{equation*}
    with positive constants
    \begin{equation*}
        c_{p,q} = \begin{cases}
                    \left( \frac{1}{64} \sqrt{\frac{p}{q}}
                    \right)^{\left.1\middle/\left(\frac{1}{p} - \frac{1}{q}\right)\right.}
                        &\text{for } q < \infty, \\
                    \left(\frac{\sqrt{p}}{64 e}\right)^p
                        &\text{for } q = \infty.
                \end{cases}
    \end{equation*}
    Then for any non-adaptive deterministic algorithm $A$ that uses $n$ pieces of information
    where
    \begin{equation*}
        16 \leq n \leq \frac{1}{54p} \cdot k^{2/p} \cdot m^{1-\frac{2}{p}} \log \frac{m}{k},
    \end{equation*}
    we have the following lower bound for the $\mu$-average error
    for the sub-probability measure as defined in Section~\ref{sec:mu}:
    \begin{equation*}
        e^{\mu}(A,\ell_p^m \embed \ell_q^m)
            > \tfrac{1}{256} \,  k^{-\left(\frac{1}{p} - \frac{1}{q}\right)}.
    \end{equation*}
\end{lemma}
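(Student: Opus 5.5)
We start from the reduction~\eqref{eq:emuvsmutilde} with $\eps = \frac{1}{4} k^{-(1/p - 1/q)}$. The aim is to show three things: $\P(\|A(\vec{X}) - \vec{u}_I\|_q \geq \eps) \geq \frac{1}{16}$ via Lemma~\ref{lem:identify_I}, $\delta$ is tiny by~\eqref{eq:deltalim}, and $\expect\|\sigma\vec{Z}_m\|_q \leq \eps/64$ or so. Together these give a bound of the form $(\frac{1}{16} - \delta)\eps - \frac{\eps}{64} > \frac{\eps}{64} = \frac{1}{256}k^{-(1/p-1/q)}$. For $\delta$, note that $m \geq 4^p$ gives $m^{2/p} \geq 16$, so $\delta < e^{-16/\pi}$, which is well below $\frac{1}{32}$.

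\textbf{Step 1: the quantity $K$.} The centres $\vec{u}_i$ are $\frac{1}{2}k^{-1/p}\vec{w}_i$. Suppose $\|\vec{x} - \vec{u}_i\|_q < \eps$ and $\|\vec{x} - \vec{u}_j\|_q < \eps$. Then $\|\vec{u}_i - \vec{u}_j\|_q < 2\eps = \frac{1}{2}k^{-(1/p-1/q)}$, which forces $\vec{w}_i$ and $\vec{w}_j$ to differ in fewer than $k$ coordinates. Hence all indices counted in $K$ lie in a Hamming ball of radius $<k$ around one fixed $\vec{w}_{i_0}$. I would bound $K$ more sharply than by the crude count: since each differing coordinate changes by at least $1$, two such $\vec{w}$'s differ in fewer than $k$ entries in total. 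This gives something like $K \leq \binom{m}{k/2}\cdot C^k$, and hence $\log\frac{M}{8K} \geq c\,k\log\frac{m}{k}$. Only $\log\frac{M}{K} \gtrsim k\log\frac{m}{k}$ is needed. If the clean ``differ in $<k$ entries'' count does not work, I would use a triangle-inequality argument to $\vec{x}$ with threshold $k/2$ instead. Either way, a count of the form $K \leq \sum_{j<k/2}\binom{m}{j}3^j$ should give $\log\frac{M}{8K} \geq \frac{k}{2}\log\frac{m}{k} - O(k)$, which is $\geq \frac{k}{3}\log\frac{m}{k}$ once $k \leq c\,m$.

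\textbf{Step 2: choosing $R$.} First compute $\frac{1}{M}\sum_i\|N\vec{u}_i\|_2^2$. By symmetry, the average of $\vec{u}_i\vec{u}_i^\top$ is $\frac{1}{4}k^{-2/p}\frac{k}{m}\id_m$. So the average equals $\frac{1}{4}k^{1-2/p}m^{-1}\|N\|_F^2 = \frac{1}{4}k^{1-2/p}m^{-1}n\sigma^{-2} = 4p\,n\,k^{1-2/p}m^{2/p-1}$, using that the rows have norm $\sigma^{-1}$. By Markov, the choice $R^2 = 8p\,n\,k^{1-2/p}m^{2/p-1}$ gives $\#\mathcal{I} \geq M/2$. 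The hypothesis $n \leq \frac{1}{54p}k^{2/p}m^{1-2/p}\log\frac{m}{k}$ then yields $2R^2 \leq \frac{16}{54}k\log\frac{m}{k}$. This is at most $\log\frac{M}{8K}$ by Step~1, provided the constant there is at least $\frac{8}{27}$. Matching this constant is the step I expect to be the main obstacle, and the $54$ suggests it was tuned for exactly this. Lemma~\ref{lem:identify_I} then applies, since $n \geq 16$.

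\textbf{Step 3: the noise term.} For $q<\infty$, use $\expect\|\vec{Z}_m\|_q \leq (m\,\expect|Z|^q)^{1/q} \leq m^{1/q}\sqrt{q}$. This gives $\expect\|\sigma\vec{Z}_m\|_q \leq \frac{1}{4}\sqrt{q/p}\,m^{-(1/p-1/q)}$, and the condition $k \leq c_{p,q}m$ makes this at most $\eps/64$. For $q=\infty$, use $\expect\|\vec{Z}_m\|_\infty \leq \sqrt{2\log(2m)} \lesssim e\sqrt{\log m}$. This gives $\sigma e\sqrt{\log m}\,m^{-1/p} \cdot 4\sqrt{p}/4$-type bounds, and $k \leq c_{p,\infty}m/(\log m)^{p/2}$ makes this at most $\eps/64$; the constants $64$ and $e$ in $c_{p,q}$ match this.

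Finally, combine everything in~\eqref{eq:emuvsmutilde}.
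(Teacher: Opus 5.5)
Your skeleton is the paper's: the reduction \eqref{eq:emuvsmutilde} with $\eps=\frac14k^{-(1/p-1/q)}$, Lemma~\ref{lem:identify_I} with $R$ chosen via Markov's inequality, $\delta<\frac1{32}$ from $m\ge 4^p$, and Gaussian moment bounds that reproduce exactly the constants $c_{p,q}$. The genuine difference is the bound on $K$. The paper embeds a Gilbert--Varshamov code in $W$ (Lemma~\ref{lem:SsubsetW}, Corollary~\ref{cor:U(v)}) and obtains only $M/K>(m/k)^{k/10}$. You instead count directly the $\vec w\in W$ that differ from a fixed $\vec w_{i_0}$ in fewer than $k$ coordinates.

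Your route is more elementary and gives an exponent close to $k/2$ rather than $k/10$. It also works at the radius that actually arises, namely $\|\vec w_i-\vec w_{i_0}\|_q<k^{1/q}$. The paper's proof writes $\frac12k^{1/q}$ at this point, which is the radius Corollary~\ref{cor:U(v)} handles.

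The gain matters for the stated constant. Your value $\expect\|\vec v_I\|_2^2=4pn(k/m)^{1-2/p}$ is correct for $\sigma=\frac{1}{4\sqrt p}m^{-1/p}$. The paper's proof instead uses $\sigma^{-1}=2\sqrt p\,m^{1/p}$ and obtains a quarter of this value. With the correct $R^2=8pn(k/m)^{1-2/p}$, the exponent $k/10$ only yields the lemma with a constant of order $\frac{1}{216p}$ instead of $\frac{1}{54p}$. So your requirement $\log\frac{M}{8K}\ge\frac{8}{27}k\log\frac{m}{k}$ is exactly what the stated constant needs.

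Step~1 needs two repairs. First, $K\le\sum_{j<k/2}\binom{m}{j}3^j$ is not a valid bound: it leaves out the choice of which coordinates of $\mathrm{supp}\,\vec w_{i_0}$ are dropped, and the up to $2^k$ sign patterns on the common support. Argue instead that differing in fewer than $k$ places forces $|\mathrm{supp}\,\vec w\cap\mathrm{supp}\,\vec w_{i_0}|\ge j:=\lfloor k/2\rfloor+1$. A union bound over the $j$-subsets of $\mathrm{supp}\,\vec w_{i_0}$ then gives $K\le 2^k\binom{k}{j}\binom{m-j}{k-j}$, hence
\[
  \frac{M}{K}\ \ge\ \binom{k}{j}^{-1}\prod_{i=0}^{j-1}\frac{m-i}{k-i}\ \ge\ 2^{-k}\Bigl(\frac{m}{k}\Bigr)^{k/2}.
\]

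Second, ``once $k\le cm$'' must be made quantitative from the hypotheses. As at the start of the paper's proof, they give $m>4096\,k$ (for $q=\infty$ via $m\ge 4^p$). So $\log\frac{m}{k}>12\log 2$, and
\[
  \log\frac{M}{8K}-\frac{8}{27}\,k\log\frac{m}{k}\ \ge\ \frac{11}{54}\,k\log\frac{m}{k}-(k+3)\log 2\ >\ \Bigl(\frac{13}{9}\,k-3\Bigr)\log 2\ >\ 0 .
\]
Some care is genuinely needed here. The crude chain $\binom{m}{k}\ge(m/k)^k$, $\binom{k}{s}\le 2^k$, $\sum_{s\le k/2}\binom{m}{s}\le(2em/k)^{k/2}$ falls just short of $\frac{8}{27}$ for $10\le k\le 13$ when $m/k$ is close to $4096$. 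With these two fixes your argument is complete.
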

\begin{proof}
    We start by showing that our assumptions on $m$ and $k$ imply
    \begin{equation}\label{eq:m>>k}
        m > 64^2 \, k = 4096 \, k.
    \end{equation}
    For $q < \infty$, this follows directly if we bound $c_{p,q}$ from above
    using $\frac{p}{q} < 1$ and $\left(\frac{1}{p} - \frac{1}{q}\right) < \frac{1}{2}$.
    For $q = \infty$ we use $m \geq 4^p$ and obtain
    \begin{equation*}
        k \leq c_{p,\infty} \cdot \frac{m}{(\log m)^{p/2}}
          \leq \left(\frac{\sqrt{p}}{64e}\right)^p \cdot \frac{m}{(p \log 4)^{p/2}}
          = \left(\frac{1}{64e \sqrt{\log 4}}\right)^p \cdot m
          < \frac{1}{64^2} \cdot m.
    \end{equation*}
    
    We put $\eps = \frac{1}{4} \, k^{-\left(\frac{1}{p} - \frac{1}{q}\right)}$.
    For $\vec{x} \in \R^m$ consider the set
    \begin{equation*}
        J(\vec{x}) := \left\{i \in [M] \colon \|\vec{x} - \vec{u}_i\|_q < \eps\right\}.
    \end{equation*}
    If $J(\vec{x}) \neq \emptyset$, 
    for $j,i \in J(\vec{x})$ we have $\|\vec{u}_i - \vec{u}_j\|_q < 2\eps$,
    hence, for any fixed $j \in J(\vec{x})$ we obtain the inclusion
    \begin{equation*}
        J(\vec{x}) \subseteq \left\{i \in [M] \colon \|\vec{u}_j - \vec{u}_i\|_q < 2\eps\right\}.
    \end{equation*}
    Recalling $\vec{u}_i = \frac{1}{2} k^{-1/p} \vec{w}_i$, we can apply Corollary~\ref{cor:U(v)} 
    (the prerequisite $m \geq 3k$ is covered by~\eqref{eq:m>>k}) and obtain
    \begin{align*}
        \#J(\vec{x}) 
        &\leq \# \left\{i \in [M] \colon \|\vec{u}_j - \vec{u}_i\|_q < 2\eps\right\} \\
        &= \#\left\{i \in [M] \colon \|\vec{w}_j - \vec{w}_i\|_q < \tfrac{1}{2} \, k^{1/q}\right\} \\
        &< \left(\frac{k}{m}\right)^{k/10} \cdot \# W\,.
    \end{align*}
    Hence, with $K = \sup\limits_{\vec{x} \in \R^m} \#J(\vec{x})$ as defined in~\eqref{eq:K} 
    and $M = \# W$, we find
    \begin{equation} \label{eq:M/K}
        \frac{M}{K} > \left(\frac{m}{k}\right)^{k/10}.
    \end{equation}
    The prerequisite $2R^2 \leq \log\frac{M}{8K}$ from Lemma~\ref{lem:identify_I} 
    will be met if
    \begin{equation*}
        R^2 \leq \frac{1}{2}\left(\frac{k}{10} \log \frac{m}{k} - \log 8\right).
    \end{equation*}
    Since $k \geq 10$ and $\frac{m}{k} > 4096$, see~\eqref{eq:m>>k}, 
    the following is an even stricter requirement:
    \begin{equation} \label{eq:Rsecondrequirement}
        R^2 \leq \frac{k}{27} \log \frac{m}{k}\,.
    \end{equation}

    Recall $\vec{v}_i = N\vec{u}_i = \frac{1}{2} \, k^{-1/p} \cdot N\vec{w}_i$
    from~\eqref{eq:Xdef} and~\eqref{eq:Y},
    where in the context of Lemma~\ref{lem:identify_I} we assumed that
    the information matrix $N = (N_{ij}) \in \R^{n \times m}$ has orthogonal rows
    with $\ell_2$-norm $\sigma^{-1} = 2\sqrt{p}\,m^{1/p}$ each.
    The coordinates of \mbox{$\vec{w}_I = (X_1,\ldots,X_m)$} 
    take values in~$\{-1,0,1\}$ and are pairwise uncorrelated,
    hence,
    \begin{align*}
        \expect \|\vec{v}_I\|_2^2
            &= \tfrac{1}{4} \, k^{-2/p} \sum_{i=1}^n \expect\left(\sum_{j=1}^m N_{ij} X_j\right)^2 \\
            &= \tfrac{1}{4} \, k^{-2/p} \sum_{i=1}^n \sum_{j=1}^m 
                    N_{ij}^2 \underbrace{\expect X_j^2}_{=\frac{k}{m}} \\
            &= \tfrac{1}{4} \, k^{-2/p} \cdot \frac{k}{m} \cdot n \cdot 4p\,m^{2/p} \\
            &= p n \cdot \left(\frac{k}{m}\right)^{1-\frac{2}{p}}.
    \end{align*}
    Putting $R:= \sqrt{2pn} \cdot \left(\frac{k}{m}\right)^{\frac{1}{2} - \frac{1}{p}}$,
    Markov's inequality gives $\P\left(\|\vec{v}_I\|_2^2 \leq R^2\right) \geq \frac{1}{2}$, 
    in other words,
    the set $\mathcal{I}$ as defined in Lemma~\ref{lem:identify_I} 
    satisfies $\#\mathcal{I} \geq \frac{M}{2}$.
    For this value of $R$, the requirement~\eqref{eq:Rsecondrequirement} reads
    \begin{equation*}
        2pn \cdot \left(\frac{k}{m}\right)^{1-\frac{2}{p}}
            \leq \frac{k}{27} \log \frac{m}{k}
        \qquad\Longleftrightarrow\qquad
        n \leq \frac{1}{54p} \cdot k^{2/p} \cdot m^{1-\frac{2}{p}} \log \frac{m}{k} \,.
    \end{equation*}
    With this, the assumptions of Lemma~\ref{lem:identify_I} are thus satisfied,
    which implies the probability bound $\P(\|A(\vec{X}) - \vec{u}_I\|_q \geq \eps) \geq \frac{1}{16}$.
    Now, we can finally evaluate the lower bound~\eqref{eq:emuvsmutilde} for the $\mu$-average error:
    \begin{equation*}
        e^{\mu}(A,\ell_p^m \embed \ell_q^m)
            \geq \bigl[\P(\|A(\vec{X}) - \vec{u}_I\|_q \geq \eps) - \delta\bigr] \cdot \eps
              - \expect \|\sigma \vec{Z}_m\|_q.
    \end{equation*}
    For $m \geq 4^p > \left(\pi \log(32)\right)^{p/2}$
    we have 
    $\delta \stackrel{\eqref{eq:deltalim}}{<} \exp\left(-\frac{m^{2/p}}{\pi}\right) < \frac{1}{32}$, 
    hence,
    \begin{equation*}
        \bigl[\P(\|A(\vec{X}) - \vec{u}_I\|_q \geq \eps) - \delta\bigr] \cdot \eps
            > \left[\tfrac{1}{16} - \tfrac{1}{32}\right] 
                \cdot \tfrac{1}{4} \, k^{-\left(\frac{1}{p} - \frac{1}{q}\right)}
            = \tfrac{1}{128} \, k^{-\left(\frac{1}{p} - \frac{1}{q}\right)}.
    \end{equation*}
    From~\eqref{eq:GaussianExBound} and~\eqref{eq:GaussianExBoundoo}, 
    which can be used since $m \geq 3$,
    we have
    \begin{equation*}
      \expect \|\sigma \vec{Z}_m\|_q
        \leq \begin{cases}
                \frac{1}{4} \sqrt{\frac{q}{p}} \cdot  m^{-\left(\frac{1}{p} - \frac{1}{q}\right)}
                  & \text{for } q < \infty, \\
                \frac{e}{4} \sqrt{\frac{\log m}{p}} \cdot m^{-1/p}
                  & \text{for } q = \infty.
              \end{cases}
    \end{equation*}
    Restricting the range of $k$ relative to $m$ as stated in the assumptions of the lemma
    ensures 
    $\expect \|\sigma \vec{Z}_m\|_q \leq \frac{1}{256} \,  k^{-\left(\frac{1}{p} - \frac{1}{q}\right)}$,
    hence,
    \begin{equation*}
        e^{\mu}(A,\ell_p^m \embed \ell_q^m)
            > \tfrac{1}{256} \,  k^{-\left(\frac{1}{p} - \frac{1}{q}\right)}.
    \end{equation*}
    This is the statement of the lemma.
\end{proof}

\section{Main result}
\label{sec:rannonadaLB}

\begin{theorem}\label{thm:Main}
    Let $2 \leq p < q \leq \infty$ and $m,n \in \N$ where $m > m_0(p,q)$ is sufficiently large.
    Restricting to $n \leq m^\alpha$ for some $0 < \alpha < 1$,
    we obtain non-adaptive randomized error rates
    \begin{equation*}
        e^{\ran,\nonada}(n,\ell_p^m \embed \ell_q^m)
            \asymp \min \left\{  1,\; 
                            \left( \frac{m^{1-2/p} \cdot \log m}{n} 
                            \right)^{\frac{1}{2}\left(1-\frac{p}{q}\right)}  
                        \right\}. 
    \end{equation*}
    Conversely, let $0 < \beta < \frac{1}{p} - \frac{1}{q}$.
    Then there exists a constant $\eps_0 \in (0,1)$
    such that for $\eps \in [m^{-\beta},\eps_0]$,
    we have the non-adaptive randomized asymptotic complexity
    \begin{equation*}
        n^{\ran,\nonada}(\eps, \ell_p^m \embed \ell_q^m)
            \asymp \eps^{\left. - \frac{2}{p} \middle/ \left(\frac{1}{p} - \frac{1}{q}\right)\right.} 
                \cdot m^{1-\frac{2}{p}} \cdot \log m.
    \end{equation*}
    The implicit constants may depend on $p$, $q$, $\alpha$, and $\beta$,
    but not on $m$ and $n$ or $\eps$, respectively.
\end{theorem}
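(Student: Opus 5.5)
The upper bounds in both statements are taken from \cite[Thm~4.4]{KW24c}, so only the lower bounds need proof. The plan is to combine Bakhvalov's inequality~\eqref{eq:Bakhvalov} with Lemma~\ref{lem:avgLB}, for a suitably chosen sparsity parameter $k$ depending on $(n,m)$.

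\textbf{Error lower bound.} Given $n \leq m^\alpha$, we take the largest admissible $k$, namely
\begin{equation*}
  n \leq \frac{1}{54p}\, k^{2/p}\, m^{1-2/p} \log\frac{m}{k}.
\end{equation*}
Since $\log\frac{m}{k} \asymp \log m$ whenever $k \leq m^{\gamma}$ for some fixed $\gamma<1$, this suggests
\begin{equation*}
  k \asymp \left(\frac{n}{m^{1-2/p}\log m}\right)^{p/2}.
\end{equation*}
Concretely, set $k := \max\bigl\{10, \lceil C (n/(m^{1-2/p}\log m))^{p/2}\rceil\bigr\}$ with $C$ depending on $p$. We then check the hypotheses of Lemma~\ref{lem:avgLB}:
\begin{itemize}
\item Because $n \leq m^\alpha$, we get $k \preceq m^{p(\alpha - 1 + 2/p)/2} = m^{1 - p(1-\alpha)/2}$. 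This is at most $m^{\gamma}$ with $\gamma<1$, and hence eventually below $c_{p,q} m$, and also below $c_{p,\infty} m/(\log m)^{p/2}$ for $m>m_0$.
\item With $\log(m/k) \geq (1-\gamma)\log m$, the cardinality condition on $n$ holds for suitable $C$.
\item If $n<16$, we replace $n$ by $16$, which only increases the error by monotonicity.
\end{itemize}
Lemma~\ref{lem:avgLB} and~\eqref{eq:Bakhvalov} then give
\begin{equation*}
  e^{\ran,\nonada} \succeq k^{-(1/p-1/q)} \asymp \min\{1, (n/(m^{1-2/p}\log m))^{-\frac{p}{2}(1/p-1/q)}\}.
\end{equation*}
Since $\frac p2(\frac1p-\frac1q)=\frac12(1-\frac pq)$, this matches the claim. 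When $k=10$ (the case of small $n$), the bound is a constant, which accounts for the $\min$ with $1$.

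\textbf{Complexity lower bound.} Given $\eps\in[m^{-\beta},\eps_0]$, we choose $k \asymp \eps^{-1/(1/p-1/q)}$ with constants arranged so that $\frac{1}{256}k^{-(1/p-1/q)} > \eps$; this requires $\eps_0$ small enough that $k\geq 10$. Since $\beta<\frac1p-\frac1q$, we get $k \leq m^{\beta/(1/p-1/q)} = m^{\gamma}$ with $\gamma<1$. Therefore the upper constraints on $k$ hold for large $m$, and $\log(m/k)\asymp \log m$. Lemma~\ref{lem:avgLB} then shows that every algorithm with
\begin{equation*}
  n \leq \frac{1}{54p} k^{2/p} m^{1-2/p}\log\frac mk \asymp \eps^{-\frac2p/(\frac1p-\frac1q)} m^{1-2/p}\log m
\end{equation*}
has error $>\eps$. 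Some care is needed because we need $n\geq16$ in the lemma; this is harmless, since a smaller $n$ is only worse. Hence $n^{\ran,\nonada}(\eps)$ exceeds this quantity.

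The main obstacle is bookkeeping rather than new ideas. One must verify for $m>m_0(p,q,\alpha,\beta)$ that the chosen $k$ respects both the upper constraint ($q=\infty$ is the delicate case because of the $(\log m)^{p/2}$ factor) and the integrality and lower constraint $k\geq10$. One must also absorb the rounding of $k$ and the $\log(m/k)$ versus $\log m$ discrepancy into constants depending only on $p,q,\alpha,\beta$. The polynomial gap $k\leq m^\gamma$ ensured by $\alpha<1$ and $\beta<\frac1p-\frac1q$ is exactly what handles all of these.
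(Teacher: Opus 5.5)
Your proof is correct. For the complexity bound it is the paper's argument: choose $k\asymp \eps^{-1/(\frac1p-\frac1q)}$, use $\eps\ge m^{-\beta}$ to get $k\le m^{\gamma}$ with $\gamma<1$, then apply Lemma~\ref{lem:avgLB} together with Bakhvalov's inequality. The one step you leave implicit is that the admissible range of $n$ is nonempty, i.e.\ $\frac{1}{54p}k^{2/p}m^{1-2/p}\log\frac mk\ge 16$. The paper secures this through $\eps_2$; in your setup it follows at once for large $m$.

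For the error bound you take a genuinely different and more direct route. The paper never applies Lemma~\ref{lem:avgLB} with $k$ tuned to $n$. Instead it first proves the complexity bound in the form \eqref{eq:n(eps,m)}, $n^{\ran,\nonada}(\eps)>\nu(\eps)$, and then inverts it. It sets $n_0=\lfloor\nu(\eps_0)\rfloor$ and defines the explicit candidate $e(n)$ in \eqref{eq:e(n)def}. It then checks three things: $\nu(e(n))\ge n$, $e(n)\le\eps_0$, and that $e(n)$ satisfies the admissibility constraint \eqref{eq:eps>precise}.

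Your choice $k=\max\{10,\lceil C(n/(m^{1-2/p}\log m))^{p/2}\rceil\}$ skips all of this inversion bookkeeping. Take $C=(54p/(1-\gamma))^{p/2}$, with $\gamma\in(\alpha,1)$ fixed first; this is possible because the exponent $1-\frac p2(1-\alpha)$ is at most $\alpha$. The $\min\{1,\cdot\}$ in the statement then comes directly from the $\max\{10,\cdot\}$.

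What the paper's route buys is a slightly sharper intermediate result. Its bound \eqref{eq:e(n)asymp} has $\log\frac mn$ in place of $\log m$. It needs only $m\ge Cn$, or $m/\log m\ge Cn$ for $q=\infty$, and the polynomial restriction $n\le m^\alpha$ enters only at the final simplification. Your argument uses $n\le m^\alpha$ earlier, to get $k\le m^\gamma$ and hence $\log\frac mk\ge(1-\gamma)\log m$. You could recover the $\log\frac mn$ version too, by taking $k\asymp(n/(m^{1-2/p}\log\frac mn))^{p/2}$, which satisfies $k\preceq n$.

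For the theorem as stated, both routes give the same result, and yours is shorter.
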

\begin{proof}
    The upper error and complexity bounds can be found in \cite[Thm~4.4]{KW24c}.

    We start with the lower bound for the complexity.
    We switch to the average case setting with
    \begin{equation*}
        k := \left\lfloor (256\, \eps)^{-\left.1\middle/\left(\frac{1}{p} - \frac{1}{q}\right)\right.} 
                \right\rfloor
            \asymp \eps^{-\left.1\middle/\left(\frac{1}{p} - \frac{1}{q}\right)\right.}.
    \end{equation*}
    We have $k \geq 10$ 
    if we assume $\eps \leq \frac{1}{256} \cdot 10^{-\left(\frac{1}{p} - \frac{1}{q}\right)} =: \eps_1$.
    Restricting to
    \begin{equation} \label{eq:eps>precise}
        \eps 
            \geq \begin{cases}
                    \frac{1}{4}\sqrt{\frac{q}{p}} \cdot m^{-\left(\frac{1}{p} - \frac{1}{q}\right)}
                        &\text{for } q < \infty,\\
                    \frac{e}{4\sqrt{p}} \cdot m^{-1/p} \sqrt{\log m}
                        &\text{for } q = \infty,
                \end{cases}
    \end{equation}
    the conditions for $k \ll m$ from Lemma~\ref{lem:avgLB} are met.
    For $\eps \geq m^{-\beta}$ and sufficiently large $m$,
    the prerequisite~\eqref{eq:eps>precise} is fulfilled.
    If we take
    \begin{align} \label{eq:ntoosmall}
        16 \leq 
        n &\leq \frac{1}{54p} \cdot k^{2/p} \cdot m^{1-\frac{2}{p}} \cdot \log \frac{m}{k} \\
            &\asymp \eps^{-\left.\frac{2}{p}\middle/\left(\frac{1}{p} - \frac{1}{q}\right)\right.}
                \cdot m^{1-\frac{2}{p}} 
                \cdot \log \left(m \cdot 
                            \eps^{\left.1\middle/\left(\frac{1}{p} - \frac{1}{q}\right)\right.}
                        \right),
            \nonumber
    \end{align}
    then Bakhvalov's technique~\eqref{eq:Bakhvalov} and Lemma~\ref{lem:avgLB} imply
    \begin{equation*}
        e^{\ran,\nonada}(n,\ell_p^m \embed \ell_q^m)
            \geq \inf_{A_n} e^{\mu}(A_n, \ell_p^m \embed \ell_q^m)
            > \eps,
    \end{equation*}
    where the infimum is taken over non-adaptive deterministic algorithms~$A_n$
    that use at most $n$~pieces of linear information.
    This argument only works if there exists $n \in \N$ that fulfils~\eqref{eq:ntoosmall}.
    Since $\frac{m}{k} \geq 4096$ for $k$ satisfying the conditions of Lemma~\ref{lem:avgLB}, 
    see~\eqref{eq:m>>k}, we have
    \begin{equation*}
        \frac{1}{54p} \cdot k^{2/p} \cdot m^{1-\frac{2}{p}} \cdot \log \frac{m}{k} 
        \geq \frac{2 \log 2}{9 p} \cdot 4096^{1-\frac{2}{p}} \cdot k.
    \end{equation*}
    The right-hand side is larger than $16$ for
    $k \geq k_0 := \left\lceil \frac{72p}{\log 2} \cdot 4096^{-\left(1-\frac{2}{p}\right)}\right\rceil$.
    Assuming $\eps \leq \frac{1}{256} \, k_0^{-\left(\frac{1}{p} - \frac{1}{q}\right)} =: \eps_2$,
    we have $k \geq k_0$.
    Altogether, $\eps_0 := \min\{\eps_1,\eps_2\}$ is a suitable upper limit for~$\eps$.
    
    The conclusion of Lemma~\ref{lem:avgLB} then is 
    that if we want to achieve an error at most $\eps$,
    we need to take more measurements than in~\eqref{eq:ntoosmall},
    that means
    \begin{equation*}
        n^{\ran,\nonada}(\eps,\ell_p^m \embed \ell_q^m)
            \succeq \eps^{-\left.\frac{2}{p}\middle/\left(\frac{1}{p} - \frac{1}{q}\right)\right.}
                \cdot m^{1-\frac{2}{p}} 
                \cdot \log \left(m \cdot 
                        \eps^{\left.1\middle/\left(\frac{1}{p} - \frac{1}{q}\right)\right.}
                    \right).
    \end{equation*}
    In summary, there exists a constant $c = c(p,q) > 0$
    such that for $\eps$ satisfying~\eqref{eq:eps>precise} and $\eps \leq \eps_0$,
    we have
    \begin{equation} \label{eq:n(eps,m)}
        n^{\ran,\nonada}(\eps,\ell_p^m \embed \ell_q^m)
            > c \cdot \eps^{-\left.\frac{2}{p}\middle/\left(\frac{1}{p} - \frac{1}{q}\right)\right.}
                \cdot m^{1-\frac{2}{p}} 
                \cdot \log \left(m \cdot 
                        \eps^{\left.1\middle/\left(\frac{1}{p} - \frac{1}{q}\right)\right.}
                    \right)
            =: \nu(\eps).
    \end{equation}
    Define
    \begin{equation*}
        n_0 := \lfloor \nu(\eps_0) \rfloor
            \succeq m^{1-\frac{2}{p}} \cdot \log m .
    \end{equation*}
    Due to monotonicity of the $n$-th minimal error, for $n \leq n_0$ we have
    \begin{equation*}
        e^{\ran,\nonada}(n,\ell_p^m \embed \ell_q^m)
            > e^{\ran,\nonada}(n_0,\ell_p^m \embed \ell_q^m) \geq \eps_0 \asymp 1.
    \end{equation*}
    For $n > n_0$, put
    \begin{equation} \label{eq:e(n)def}
        e(n) := \left(\frac{cp}{2} \cdot \frac{m^{1-2/p} \cdot \log \frac{m}{n}}{n}
                \right)^{\frac{p}{2}\left(\frac{1}{p} - \frac{1}{q}\right)},
    \end{equation}
    which is chosen such that, 
    for $m \geq e^{2/(cp)} \cdot n \;\Longleftrightarrow\; \frac{cp}{2} \cdot \log \frac{m}{n} \geq 1$, 
    we have
    \begin{equation} \label{eq:nu(e(n))}
        \nu(e(n)) 
            = \frac{n}{\log\frac{m}{n}} 
                \cdot \log\left(\frac{cp}{2} \cdot \frac{m \log \frac{m}{n}}{n}\right)
            \geq n.
    \end{equation}
    Assuming 
    $m \geq \max\{10,k_0\} \cdot 256^{\left.1\middle/\left(\frac{1}{p} - \frac{1}{q}\right)\right.} \cdot e^{1/c} \;\Longleftrightarrow\; c \cdot \log \left(m \cdot \eps_0^{\left.1\middle/\left(\frac{1}{p} - \frac{1}{q}\right)\right.}\right) \geq 1$,
    and using the fact that the expression $e(n)$ defined in~\eqref{eq:e(n)def} 
    is monotonically decaying in $n$ on its natural domain~$(0,m)$,
    for $n \geq \nu(\eps_0)$ we have
    \begin{align*}
        e(n) &\leq e(\nu(\eps_0)) \\
            &= \eps_0 \cdot \left(
                    \log \left(
                        \frac{m \cdot 
                            \eps_0^{\left.1\middle/\left(\frac{1}{p} - \frac{1}{q}\right)\right.}
                        }{\left(c \cdot \log \left(
                            m \cdot \eps_0^{\left.1\middle/\left(\frac{1}{p} - \frac{1}{q}\right)\right.}
                                \right)
                        \right)^{\frac{p}{2}}}
                    \right)
                \cdot \frac{1}{
                    \log \left(m 
                            \cdot \eps_0^{\left.1\middle/\left(\frac{1}{p} - \frac{1}{q}\right)\right.}
                        \right)}
                            \right)^{\frac{p}{2}\left(\frac{1}{p} - \frac{1}{q}\right)} \\
            &\leq \eps_0.
    \end{align*}
    Furthermore, assuming $m \geq en \;\Longleftrightarrow\; \log\frac{m}{n} \geq 1$,
    the expression~\eqref{eq:e(n)def} is lower bounded by
    \begin{equation*}
        e(n) \geq \left(\frac{cp}{2} \cdot \frac{m}{n}
                \right)^{\frac{p}{2}\left(\frac{1}{p} - \frac{1}{q}\right)}
                \cdot m^{-\left(\frac{1}{p} - \frac{1}{q}\right)}.
    \end{equation*}
    Hence, we can guarantee that $\eps = e(n)$ satisfies the lower constraint~\eqref{eq:eps>precise}
    if we further assume
    \begin{align}
        m &\geq \frac{2}{cp} \left(\frac{1}{4}\sqrt{\frac{q}{p}}\right)^{
                    \left.\frac{2}{p}\middle/\left(\frac{1}{p} - \frac{1}{q}\right)\right.
                }
                \cdot n
            &&\text{for } q < \infty, \\
        \frac{m}{\log m} &\geq \frac{e^2}{8cp^2} \cdot n
            &&\text{for } q = \infty.
    \end{align}
    Restricting to $n \leq m^\alpha$ with some $\alpha \in (0,1)$,
    these conditions are indeed satisfied for large~$m$.
    Then, by~\eqref{eq:n(eps,m)} and the definition of $e(n)$, for sufficiently large $m$, we thus have
    \begin{equation*}
        n^{\ran,\nonada}(e(n), \ell_p^m \embed \ell_q^m) > \nu(e(n)) 
            \stackrel{\eqref{eq:nu(e(n))}}{\geq} n.
    \end{equation*}
    Hence,
    \begin{equation} \label{eq:e(n)asymp}
        e^{\ran,\nonada}(n,\ell_p^m \embed \ell_q^m)
            > e(n) \asymp \left(\frac{m^{1-2/p} \cdot \log \frac{m}{n}}{n}
                \right)^{\frac{p}{2}\left(\frac{1}{p} - \frac{1}{q}\right)}.
    \end{equation}

    Finally, restricting ourselves to $\eps \geq m^{-\beta}$, we have
    \begin{equation*} \label{eq:asymplog(m)}
        \left(1 - \frac{\beta}{\frac{1}{p} - \frac{1}{q}}\right) \log m
            \leq \log \left(m \cdot  \eps^{\left.1\middle/\left(\frac{1}{p} - \frac{1}{q}\right)\right.}
                        \right)
            \leq \log m,
    \end{equation*}
    hence, the logarithmic term in~\eqref{eq:n(eps,m)} is asymptotically equivalent to $\log m$.
    Analogously, for $n \leq m^{\alpha}$ with some $\alpha \in (0,1)$,
    we have $\log\frac{m}{n} \asymp \log m$, thus simplifying~\eqref{eq:e(n)asymp}.
\end{proof}

\begin{remark}
    Compared to the upper bound results from~\cite[Thm~4.4]{KW24c},
    the range of parameters for which we prove the lower bounds
    is more constrained here.
    However, these restrictions are quite natural.
    
    Assume, for example, that we had 
    a very small value $\eps \preceq m^{-\left(\frac{1}{p} - \frac{1}{q}\right)}$.
    Then the expression of the complexity bound would evaluate to
    \begin{equation*}
        \eps^{\left. - \frac{2}{p} \middle/ \left(\frac{1}{p} - \frac{1}{q}\right)\right.} 
                \cdot m^{1-\frac{2}{p}} \cdot \log m
            \succeq m \cdot \log m ,
    \end{equation*}
    but $n=m$ already means that we have complete information about~$\vec{x} \in \R^m$, 
    allowing for exact recovery.

    For $\eps \geq \eps_0$ the expression for the complexity bound would give a value
    \begin{equation*}
         \eps^{\left. - \frac{2}{p} \middle/ \left(\frac{1}{p} - \frac{1}{q}\right)\right.} 
                \cdot m^{1-\frac{2}{p}} \cdot \log m
            \preceq m^{1 - \frac{2}{p}} \cdot \log m .
    \end{equation*}
    Restricting to such small values $n \preceq m^{1 - \frac{2}{p}} \cdot \log m$,
    the error bound gives
    \begin{equation*}
        e^{\ran,\nonada}(n,\ell_p^m \embed \ell_q^m) \asymp 1,
    \end{equation*}
    meaning that we are essentially not improving over the initial error 
    \begin{equation*}
        e^{\ran,\nonada}(0,\ell_p^m \embed \ell_q^m) = 1.
    \end{equation*}
\end{remark}

\section{Conclusions and outlook}

Previously known asymptotic lower bounds due to Heinrich~\cite{H92}
state that for \mbox{$m \geq 2n$} and $n \geq 2$ we have
\begin{equation} \label{eq:He92LB}
    e^{\ran}(n, \ell_p^m \embed \ell_q^m)
        \preceq \begin{cases}
                    n^{-\left(\frac{1}{p} - \frac{1}{q}\right)}
                        &\text{for } 1 \leq p < q < \infty,\\
                    n^{-1/p} \cdot \sqrt{\log n}
                        &\text{for } 1 \leq p < q = \infty.
                \end{cases}
\end{equation}
These bounds were obtained by studying the average case setting
with a truncated Gaussian measure centered around the origin.
By studying an average case setting 
for a truncated Gaussian mixture with centers around $k$-sparse vectors,
we now obtained lower bounds that not only exhibit a dependence on $m$,
but also match the improved upper bounds
from our recent paper~\cite{KW24b}.
The older $m$-independent bounds~\eqref{eq:He92LB} 
hold for the entire parameter range $1 \leq p < q \leq \infty$
and general randomized algorithms, including adaptive ones,
while our new $m$-dependent bounds
only cover the case of non-adaptive randomized methods for $p \geq 2$.

We believe that the same average case setting with Gaussian mixtures
can be used to prove lower bounds for adaptive randomized methods 
in the regime $2 \leq p < q \leq \infty$,
namely by employing techniques from information theory in the spirit of~\cite{PW12}
(a paper that studies lower bounds for adaptive randomized methods
in the related field of sparse recovery).
However, if we want to cover the case $1 \leq p < 2$,
we expect that a more complicated Gaussian measure is needed.
In detail, instead of a random vector $\vec{X}$ as defined in~\eqref{eq:Xdef},
we consider a random vector of the shape
\begin{equation*}
    \vec{X} = \vec{u}_I + \sigma P\vec{Z}_m,
\end{equation*}
where $P$ is a random projection onto $2n$~coordinates independent of $I$ and $\vec{Z}_m$.
(Without such a random projection, for $p < 2$ the scaling factor $\sigma$ would need to be too small
in order to control the truncation probability.)
A~measure like this has been studied in~\cite{KNW24}, yet only for $1$-sparse vectors $\vec{u}_i$.

\section*{Dedication}

We dedicate this paper to Henryk Wo\'{z}niakowski on the occasion of his 80th birthday.
He, as a leading example for the study 
of average-case and randomized settings in information-based complexity 
with deep attention to the impact of the problem size on error and cost bounds,
is a huge inspiration for our own work as in this article.
Dear Henryk, we wish you good health in the years to come
with good company and fruitful discussions with your dear colleagues!

\appendix

\section{Gaussian measures in high dimensions}

Throughout this section
let $\vec{Z}$ denote a standard Gaussian random vector with values in $\R^m$.
For $1 \leq p < \infty$ we have
\begin{equation}\label{eq:GaussianExBound}
    \sqrt{\frac{2}{\pi}} \cdot m^{1/p} 
        \leq \expect \|\vec{Z}_m\|_p 
        \leq \sqrt{p} \cdot m^{1/p},
\end{equation}
see Pisier~\cite[Lem~4.14]{Pis86} for the initial result and \cite[Lem~A.9]{Ku17} for explicit constants.
With $p=\log m$, for $m \geq e$, we obtain
\begin{equation}\label{eq:GaussianExBoundoo}
  \expect \|\vec{Z}_m\|_\infty
    \leq \expect \|\vec{Z}_m\|_p
    \leq e \cdot \sqrt{\log m}
\end{equation}

The following lemma is about concentration of Gaussian vectors around the expectation of their norm,
see~\cite[pp.~180/181]{Pis86}, or \cite[Lem~2.1.6]{adler2009random}
for a more general statement.
Here, we state it specifically applied to the $\ell_p$-norm with $p \geq 2$.

\begin{lemma}\label{lem:GaussianConcentration}
    For $2 \leq p < \infty$ we have
    \begin{align*}
        \P\bigl( \|\vec{Z}_m\|_p > (1+t) \expect \|\vec{Z}_m\|_p\bigr) 
            &\leq \exp\left(- \frac{t^2 m^{2/p}}{\pi}\right), \\
        \P\left( \|\vec{Z}_m\|_p < (1-t) \expect \|\vec{Z}_m\|_p\right) 
            &\leq  \exp\left(- \frac{t^2 m^{2/p}}{\pi}\right).
    \end{align*}
\end{lemma}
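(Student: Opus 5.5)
The statement to prove is Lemma~\ref{lem:GaussianConcentration}. I would derive it from the Gaussian concentration inequality for Lipschitz functions, in the form given by Pisier~\cite[pp.~180/181]{Pis86}: if $F\colon \R^m \to \R$ is $L$-Lipschitz with respect to the Euclidean norm, then for every $s>0$
\begin{equation*}
  \P\bigl(F(\vec{Z}_m) - \expect F(\vec{Z}_m) > s\bigr) \leq \exp\left(-\frac{2s^2}{\pi^2 L^2}\right),
\end{equation*}
and the same bound holds for the lower tail. This is Pisier's interpolation argument with $\vec{Z}_m\sin\theta + \vec{Z}_m'\cos\theta$; it comes with the constant $\pi^2/2$ rather than the sharp $2$. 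The task is then only to identify $L$ and $s$ and to check the constant.

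First, take $F(\vec{x}) = \|\vec{x}\|_p$. For $p\geq 2$ we have $\|\vec{x}\|_p \leq \|\vec{x}\|_2$, so $F$ is $1$-Lipschitz, i.e.\ $L=1$. Second, choose $s = t\,\expect\|\vec{Z}_m\|_p$ and use the lower bound in~\eqref{eq:GaussianExBound}, $\expect\|\vec{Z}_m\|_p \geq \sqrt{2/\pi}\, m^{1/p}$. This gives $s^2 \geq \frac{2}{\pi}\, t^2 m^{2/p}$, so the exponent becomes
\begin{equation*}
  \frac{2s^2}{\pi^2} \geq \frac{4\, t^2 m^{2/p}}{\pi^3}.
\end{equation*}
That is weaker than the claimed $t^2 m^{2/p}/\pi$ by a factor $\pi^2/4$.

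This mismatch is the main obstacle. The fix is to use the sharp Gaussian concentration (Tsirelson--Ibragimov--Sudakov, or~\cite[Lem~2.1.6]{adler2009random}), which gives the exponent $s^2/(2L^2)$. With the same $L$ and $s$,
\begin{equation*}
  \frac{s^2}{2} \geq \frac{t^2 m^{2/p}}{\pi},
\end{equation*}
which is exactly the stated bound. Applying the two-sided version of this inequality to $F$ and to $-F$ yields both tail estimates.

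I do not expect any further difficulty: the only inputs are the Lipschitz constant $1$, which is where $p\geq 2$ is used, and the expectation lower bound~\eqref{eq:GaussianExBound}. For $q=\infty$ the lemma is not needed.
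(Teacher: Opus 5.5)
Your argument is correct and is essentially the paper's proof. The paper applies Pisier's concentration for the embedding $\ell_2^m \embed \ell_p^m$ with exponent $t^2\varrho^2/2$, where $\varrho = \expect\|\vec{Z}_m\|_p/\|\id\|_{\ell_2^m\to\ell_p^m}$; this is exactly your sharp Lipschitz concentration with $L = 1$ (using $p \geq 2$) and $s = t\,\expect\|\vec{Z}_m\|_p$, combined with the lower bound $\expect\|\vec{Z}_m\|_p \geq \sqrt{2/\pi}\,m^{1/p}$ from~\eqref{eq:GaussianExBound}, and as in the paper you implicitly need $t > 0$.
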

\begin{proof}
    The result from \cite[pp.~180/181]{Pis86} applied to the embedding $\ell_2^m \embed \ell_p^m$ states
    \begin{equation*}
        \P\bigl( \|\vec{Z}_m\|_p > (1+t) \expect \|\vec{Z}_m\|_p\bigr) 
            \leq \exp\left(-\frac{t^2 \varrho^2}{2}\right),
    \end{equation*}
    where
    $\varrho := \expect\|\vec{Z}_m\|_p / \|\id\|_{\ell_2^m\to \ell_p^m}$.
    The same bound holds for the second probability.
    For $p \geq 2$ we have the embedding constant $\|\id\|_{\ell_2^m\to \ell_p^m} = 1$,
    so by the first inequality of~\eqref{eq:GaussianExBound} 
    we have $\rho \geq \sqrt{\frac{2}{\pi}} \cdot m^{1/p}$,
    which gives
    \begin{equation*}
        \exp\left(-\frac{t^2 \varrho^2}{2}\right) \leq \exp\left(- \frac{t^2 m^{2/p}}{\pi}\right).
    \end{equation*}
    This bounds the right-hand side of both inequalities as stated.
\end{proof}

In particular, for $p = 2$ and $t = 1 - \sqrt{\frac{\pi}{8}}$ we have
\begin{align}
    \P\left(\|\vec{Z}_m\|_2 \geq \frac{\sqrt{m}}{2}\right)
        &= 1 - \P\left( \|\vec{Z}_m\|_2 < (1-t) \cdot \sqrt{\frac{2m}{\pi}}\right) \nonumber \\
        &\geq 1 - \P\bigl( \|\vec{Z}_m\|_2 < (1-t) \expect \|\vec{Z}_m\|_2\bigr) \nonumber\\
        &\geq 1 - \exp\left(-\frac{t^2 m}{\pi}\right)
        \xrightarrow[m \to \infty]{} 1.
        \label{eq:gauss-without-inner-ball}
\end{align}
Exploiting that the last expression in~\eqref{eq:gauss-without-inner-ball}
is monotone in~$m$, we find
\begin{equation} \label{eq:P(|Z|>sqrt(n)/2}
    \P\left(\|\vec{Z}_m\|_2 \geq \tfrac{\sqrt{m}}{2}\right) \geq \tfrac{1}{2}
    \qquad\text{for } m \geq 16.
\end{equation}
This estimate will be crucial in the proof of Lemma~\ref{lem:identify_I}.
The next case is important to bound the truncation probability of the measure 
we define in Section~\ref{sec:mu}.

\begin{lemma} \label{lem:truncationprob}
    Let $2 \leq p < \infty$.
    For $\epsilon > 0$ put $\sigma:= \sigma(\epsilon) = \frac{\epsilon}{2\sqrt{p}} \cdot m^{-1/p}$.
    Then
    \begin{equation*}
        \P\left( \| \sigma \vec{Z}_m \|_p > \epsilon \right) 
            < \exp\left( - \frac{ m^{2/p}}{\pi} \right).
    \end{equation*}
\end{lemma}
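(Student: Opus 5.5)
The plan is to apply the upper concentration inequality of Lemma~\ref{lem:GaussianConcentration} with a suitable $t$, after comparing the threshold $\epsilon/\sigma$ with $\expect\|\vec{Z}_m\|_p$ via the upper bound in~\eqref{eq:GaussianExBound}.

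First, by homogeneity of the norm,
\begin{equation*}
  \P\left(\|\sigma\vec{Z}_m\|_p > \epsilon\right)
    = \P\left(\|\vec{Z}_m\|_p > \frac{\epsilon}{\sigma}\right)
    = \P\left(\|\vec{Z}_m\|_p > 2\sqrt{p}\, m^{1/p}\right).
\end{equation*}
By~\eqref{eq:GaussianExBound} we have $\expect\|\vec{Z}_m\|_p \leq \sqrt{p}\, m^{1/p}$, so $2\sqrt{p}\,m^{1/p} \geq 2\expect\|\vec{Z}_m\|_p = (1+t)\expect\|\vec{Z}_m\|_p$ with $t = 1$. Hence the event $\{\|\vec{Z}_m\|_p > 2\sqrt{p}\,m^{1/p}\}$ is contained in $\{\|\vec{Z}_m\|_p > 2\expect\|\vec{Z}_m\|_p\}$.

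Lemma~\ref{lem:GaussianConcentration} with $t=1$ then gives the bound $\exp(-m^{2/p}/\pi)$, which is valid for $2 \leq p < \infty$.

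The only subtle point is that the statement asks for a strict inequality, while the lemma yields only ``$\leq$''. I would obtain strictness from Pisier's Gaussian bound $\exp(-t^2\varrho^2/2)$ with $\varrho \geq \sqrt{2/\pi}\,m^{1/p}$. The inequality $\expect\|\vec{Z}_m\|_p \geq \sqrt{2/\pi}\, m^{1/p}$ is strict, since it comes from Jensen's inequality applied to a nondegenerate variable, namely $\expect\|\vec{Z}_m\|_p \geq \|(\expect|Z_j|)_j\|_p$. Therefore $\varrho > \sqrt{2/\pi}\,m^{1/p}$, and the resulting exponential bound is strictly smaller than $\exp(-m^{2/p}/\pi)$. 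Apart from this strictness issue, the argument is routine.
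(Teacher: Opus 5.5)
Your main argument is the paper's proof: rescale to $\P(\|\vec{Z}_m\|_p > 2\sqrt{p}\,m^{1/p})$, use the upper bound in~\eqref{eq:GaussianExBound} to pass to the event $\{\|\vec{Z}_m\|_p > 2\expect\|\vec{Z}_m\|_p\}$, and apply Lemma~\ref{lem:GaussianConcentration} with $t=1$. The paper's proof also ends with ``$\leq$'', so your concern about strictness goes beyond it.

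Your justification of strictness is not right as stated, though. Jensen's inequality for a norm is not strict just because the random vector is nondegenerate, since a norm is linear along rays. For $m=1$ you get $\expect\|\vec{Z}_1\|_p = \expect|Z_1| = \sqrt{2/\pi}$ exactly. Then $\varrho = \sqrt{2/\pi}$, and Pisier's bound equals $\exp(-1/\pi)$, which is not strictly smaller.

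For $m \geq 2$ your claim is true, and the reason is the equality case of H\"older's inequality. Write $V = (|Z_1|,\ldots,|Z_m|)$ and $f(\vec{x}) = m^{1/p-1}\sum_j x_j$. Then $\|V\|_p \geq f(V)$, with equality only when all $|Z_j|$ coincide. Since $\expect f(V) = \sqrt{2/\pi}\,m^{1/p}$ and $|Z_1| = \cdots = |Z_m|$ fails almost surely for $m \geq 2$, you get $\varrho > \sqrt{2/\pi}\,m^{1/p}$.

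The case $m=1$ is easiest to handle directly: $\P(|Z_1| > 2\sqrt{p}) \leq \P(|Z_1| > 2\sqrt{2}) < 0.01 < e^{-1/\pi}$.
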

\begin{proof}
    We apply Lemma~\ref{lem:GaussianConcentration} with $t=1$:
    \begin{align*}
        \P\left( \| \sigma \vec{Z}_m \|_p > \epsilon \right)
            &= \P\left( \|\vec{Z}_m\|_p > 2\sqrt{p} \, m^{1/p} \right) \\
            &\stackrel{\eqref{eq:GaussianExBound}}{\leq}
                \P\left( \|\vec{Z}_m\|_p > 2 \expect \|\vec{Z}_m\|_p \right) \\
            &\leq \exp\left(-\frac{m^{2/p}}{\pi}\right). \qedhere
    \end{align*}
\end{proof}

\section{Some coding theory}

We apply a result from coding theory about the existence of codes 
known as Gilbert-Varshamov bounds, compare~\cite[Lem~3.1, Sec~A]{BIPW10}.
For a general introduction to coding theory,
see, for instance~\cite{vL98}.
For words $\boldsymbol{u},\boldsymbol{v} \in [q]^k$ of length $k$ written in an alphabet of length $q$,
their \emph{Hamming distance} is defined as
\begin{equation*}
    \Delta(\vec{u},\vec{v}) := \# \{i \in [k] \colon u_i \neq v_i\} .
\end{equation*}
For a subset $T \subseteq [q]^k$, called a \emph{codebook}, 
we define its \emph{minimal Hamming distance} by
\begin{equation*}
    \Delta(T) := \min_{\substack{\vec{u},\vec{v} \in T \\ \vec{u} \neq \vec{v}}} \Delta(\vec{u},\vec{v}) .
\end{equation*}
The following lemma is standard in coding theory.
It is based on a special case of~\cite[Claim~A.1]{BIPW10}.

\begin{lemma} \label{lem:Gilbert-Varshamov}
    Let $q \geq 6$ and $k$ be natural numbers.
    Then there exists a codebook $T \subseteq [q]^k$ 
    with minimal Hamming distance $\Delta(T) \geq \frac{k}{2}$ and size
    \begin{equation*}
        \# T > q^{k/10} .
    \end{equation*}
\end{lemma}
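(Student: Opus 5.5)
We prove this with the standard greedy (Gilbert--Varshamov) construction: keep picking codewords and discarding everything within Hamming distance $< k/2$, then count via the volume of Hamming balls. Start with $S = [q]^k$ and $T = \emptyset$. While $S \neq \emptyset$, pick any $\vec{u} \in S$, add it to $T$, and remove from $S$ all words $\vec{v}$ with $\Delta(\vec{u},\vec{v}) < \frac{k}{2}$. Any two words in the resulting $T$ have distance at least $\frac{k}{2}$, since the later one survived the removal step of the earlier one. Each step removes at most
\begin{equation*}
  V := \#\{\vec{v} \in [q]^k \colon \Delta(\vec{u},\vec{v}) < \tfrac{k}{2}\}
\end{equation*}
words, so $\#T \geq q^k / V$.

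Next we bound the Hamming ball volume. A word at distance $< k/2$ from $\vec{u}$ is determined by a set of fewer than $k/2$ positions where it differs, together with new letters at those positions. Hence
\begin{equation*}
  V \leq \sum_{j < k/2} \binom{k}{j} (q-1)^j \leq 2^k \, q^{k/2}.
\end{equation*}
This gives $\#T \geq q^{k/2} 2^{-k} = \left(q^{1/2}/2\right)^k$.

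It remains to check $\left(q^{1/2}/2\right)^k > q^{k/10}$, which is equivalent to $q^{2/5} > 2$, i.e.\ $q > 2^{5/2} \approx 5.66$. This holds for every $q \geq 6$, so the claim follows for all $k \geq 1$. For $k = 0$ the statement reads $\#T > 1$, which fails for $[q]^0$. We therefore assume $k \geq 1$, as is implicit in how the lemma is applied later.

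There is no real obstacle in this argument. The only care needed is in the volume bound, specifically using the crude estimates $\sum_j \binom{k}{j} \leq 2^k$ and $(q-1)^j \leq q^{k/2}$ so that the constants still work at $q = 6$. Strictness also holds at the boundary: for $q = 6$, $6^{2/5} \approx 2.05 > 2$.
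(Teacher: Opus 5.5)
Your proof is correct and is essentially the paper's argument: a greedy/maximal packing that excludes Hamming balls of radius $\lfloor (k-1)/2 \rfloor$ (equivalently, distance $< k/2$), the crude volume bound $\sum_{j} \binom{k}{j}(q-1)^j \le 2^k q^{k/2}$, and the check $q^{2/5} > 2$ for $q \ge 6$. Your remark that $k \ge 1$ is needed for the strict inequality is accurate, and the paper's final step $q^{(1/2 - \log_q 2)k} > q^{k/10}$ tacitly relies on it as well.
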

\begin{proof}
    We define $r := \left\lfloor \frac{k-1}{2} \right\rfloor$.
    For $\vec{u} \in [q]^k$ we define the Hamming ball of radius $r$ by
    \begin{equation*}
        B_r(\vec{u}) := \{\vec{v} \in [q]^k \colon \Delta(\vec{u},\vec{v}) \leq r\} \,.
    \end{equation*}
    By the definition of $r$, 
    for $\vec{v} \in [q]^k \setminus B_r(\vec{u})$ we have $\Delta(\vec{u},\vec{v}) \geq \frac{k}{2}$. 
    Assume that we have a codebook $T \subseteq [q]^k$ with $\Delta(T) \geq \frac{k}{2}$ that satisfies
    $\# T < q^k / \# B_r(\vec{u})$,
    then there exists $\vec{v} \in [q]^k \setminus \bigcup_{\vec{u} \in T} B_r(\vec{u})$ 
    and the larger codebook still satisfies $\Delta(T \cup \{\vec{v}\}) \geq \frac{k}{2}$.
    This packing argument implies
    that there exist $T \subseteq [q]^k$ with $\Delta(T) \geq \frac{k}{2}$ and
    \begin{equation} \label{eq:packingargument}
        \# T \geq \frac{q^k}{\# B_r(\vec{u})} \,.
    \end{equation}
    We estimate the cardinality of a single Hamming ball in $[q]^k$:
    \begin{align*}
        \# B_r(\vec{u})
            &= \sum_{j=0}^r \binom{k}{j} (q-1)^j
            \leq (q-1)^r \sum_{j=0}^r \binom{k}{j}
            \leq (q-1)^r \cdot \frac{1}{2} \underbrace{\sum_{j=0}^k \binom{k}{j}}_{=(1+1)^k} \\
            &\leq q^{k/2} \cdot 2^k.
    \end{align*}
    Together with~\eqref{eq:packingargument} we have
    \begin{equation*}
        \#T \geq q^{k/2} \cdot 2^{-k}
            = q^{\left(\frac{1}{2} - \log_q 2\right) k}
            \stackrel{[q \geq 6]}{>} q^{k/10} . \qedhere
    \end{equation*}
\end{proof}

We use this result to prove a lower for a packing number in $W = W_k^m$ as defined in~\eqref{eq:W_k}.
The following is adapted from~\cite[Lem~3.1]{BIPW10}.
\begin{lemma} \label{lem:SsubsetW}
    Let $k,m \in \N$ with $m \geq 3k$ and $p \in [1,\infty]$. 
    Then there exists a set $S \subseteq W_k^m$ with cardinality
    \begin{equation*}
        \# S \geq \left(2\left\lfloor \frac{m}{k} \right\rfloor\right)^{k/10}
            > \left(\frac{m}{k}\right)^{k/10} .
    \end{equation*}
    that has a minimal $\ell_p$-separation of
    \begin{equation*}
        \min_{\substack{\vec{v},\vec{v}'\in S \\ \vec{v} \neq \vec{v'}}} \|\vec{v} - \vec{v}'\|_p
            \geq k^{1/p}.
    \end{equation*}
\end{lemma}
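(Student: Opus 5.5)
We build $S$ by concatenating a code with a fixed block structure, as in \cite[Lem~3.1]{BIPW10}.

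Put $\ell := \lfloor m/k \rfloor$ and split the first $k\ell \leq m$ coordinates into $k$ disjoint blocks $B_1,\ldots,B_k$ of $\ell$ coordinates each. Inside a block, a ``letter'' is a pair (position in the block, sign), so the alphabet has size $q := 2\ell$. Since $m \geq 3k$ we have $\ell \geq 3$, hence $q \geq 6$, and Lemma~\ref{lem:Gilbert-Varshamov} yields a codebook $T \subseteq [q]^k$ with $\Delta(T) \geq k/2$ and $\#T > q^{k/10} = (2\lfloor m/k\rfloor)^{k/10}$.

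Each codeword $\vec{t} = (t_1,\ldots,t_k) \in T$ is mapped to $\vec{v}(\vec{t}) \in \{-1,0,1\}^m$. In block $B_j$ we place a single nonzero entry, whose position and sign are determined by the letter $t_j$. All coordinates outside the blocks are zero. The vector then has exactly $k$ nonzero entries, one per block, so $\vec{v}(\vec{t}) \in W_k^m$. The map is injective, and we set $S := \{\vec{v}(\vec{t}) \colon \vec{t} \in T\}$, so that $\#S = \#T$.

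For separation, take $\vec{t} \neq \vec{t}'$. They differ in at least $k/2$ letters. In a block $j$ with $t_j \neq t'_j$, there are two cases:
\begin{itemize}
\item if the positions differ, the difference $\vec{v}(\vec{t}) - \vec{v}(\vec{t}')$ has two entries of absolute value $1$ in that block, contributing $2$ to $\|\cdot\|_p^p$;
\item if the position agrees but the sign differs, there is one entry of absolute value $2$, contributing $2^p \geq 2$.
\end{itemize}
For $p<\infty$ this gives $\|\vec{v}-\vec{v}'\|_p^p \geq 2 \cdot \frac{k}{2} = k$, so $\|\vec{v}-\vec{v}'\|_p \geq k^{1/p}$. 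For $p=\infty$ the claim reads $\|\vec{v}-\vec{v}'\|_\infty \geq 1 = k^{0}$, which is immediate because $\vec{v}\neq\vec{v}'$ are integer vectors.

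For the strict inequality, note that $2\lfloor m/k\rfloor \geq 2(m/k - 1) \geq m/k$ whenever $m/k \geq 2$, and this holds since $m \geq 3k$. The expected proposal then gives
\[
\#S = \#T > (2\lfloor m/k\rfloor)^{k/10} \geq (m/k)^{k/10},
\]
which already contains the strict inequality. The only delicate point is the separation count: the sign-flip case must not contribute less than the position-change case, and it does not, since $2^p \geq 2$ for $p \geq 1$.
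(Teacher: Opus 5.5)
Your proof is correct and is essentially the paper's argument. It uses the same ingredients: an alphabet of $2\lfloor m/k\rfloor \geq 6$ signed unit vectors per block, the codebook from Lemma~\ref{lem:Gilbert-Varshamov}, and a contribution of at least $\min\{2^p, 1+1\} = 2$ to $\|\cdot\|_p^p$ from each of the at least $k/2$ differing blocks. One small strengthening: since $\lfloor m/k\rfloor > m/k - 1$, your estimate $2\lfloor m/k\rfloor \geq m/k$ is actually strict, which gives the middle strict inequality exactly as stated.
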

\begin{proof}
    Consider the alphabet
    $A = \{\vec{e}_1,\ldots,\vec{e}_s,-\vec{e}_1,\ldots,-\vec{e}_s\} \subset \R^s$
    for~$s := \left\lfloor \frac{m}{k} \right\rfloor$
    with $q := \#A = 2s \geq 6$ letters.
    An element $\vec{u} = (a_1,\ldots,a_k) \in A^k$ corresponds to a concatenated vector
    \begin{equation*}
        \vec{v} = f(\vec{u})
            := (a_1 , \ldots , a_k, \underbrace{0, \ldots, 0}_{m - ks}) \in \R^m ,
    \end{equation*}
    where $f\colon A^k \to \R^m$ is injective.
    Note that $\vec{v}$ has exactly $k$ non-zero entries,
    and if two codewords $\vec{u},\vec{u}' \in A^k$ satisfy $\Delta(\vec{u},\vec{u}') \geq \frac{k}{2}$ 
    then the corresponding vectors $f(\vec{u}),f(\vec{u}') \in \R^m$ 
    satisfy \mbox{$\|f(\vec{u}) - f(\vec{u}')\|_1 \geq k$}. 
    Indeed, if two entries of $\vec{u}$ and $\vec{u}'$ differ, $a_j \neq a_j'$, 
    then there are two possibilities for the related coordinates $i \in I_j := [js] \setminus [(j-1)s]$ 
    of $\vec{v} = f(\vec{u})$ and $\vec{v}' = f(\vec{u}')$:
    \begin{itemize}
        \item There exists $i^\ast \in I_j$ with $|v_{i^\ast} - v_{i^\ast}'| = 2$,
            and $v_i = v_i' = 0$ for all $i \in I_j \setminus\{i^\ast\}$.
        \item There are two indices $i_1 \neq i_2$ in $I_j$ where $|v_{i_1}| = |v_{i_2}'| = 1$,
            further, $v_i = 0$ for all $i \in I_j \setminus\{i_1\}$, 
            as well as $v_i'=0$ for all $i \in I_j \setminus\{i_2\}$.
    \end{itemize}
    Along the same lines, 
    one can prove $\|f(\vec{u}) - f(\vec{u}')\|_p^p \geq \frac{k}{2} \cdot \min\{2^p, 1^p+1^p\} = k$ 
    for $p \in [1,\infty)$.
    For $p=\infty$, the bound $\|f(\vec{u}) - f(\vec{u}')\|_\infty \geq 1$ 
    follows directly from a single differing entry.
    By Lemma~\ref{lem:Gilbert-Varshamov}, there exists a set $T \subseteq A^k$ 
    with minimal Hamming distance $\frac{k}{2}$ and $\#T > q^{k/10}$.
    The set $S := f(T) \subseteq \R^m$ of the corresponding vectors with entries in $\{-1,0,1\}$ 
    then satisfies the assertion.
\end{proof}

\begin{corollary} \label{cor:U(v)}
    Let $k,m \in \N$ with $m \geq 3k$ and consider the set $W_k^m$ as in~\eqref{eq:W_k}.
    For $\vec{v} \in W_k^m$ and $p \in [1,\infty]$ define the set
    \begin{equation*}
        U(\vec{v}) := \left\{\vec{w} \in W_k^m\colon 
                            \|\vec{w} - \vec{v}\|_p < \tfrac{1}{2}\, k^{1/p}
                    \right\}.
    \end{equation*}
    Then it holds
    \begin{equation*}
        \frac{\#W_k^m}{\#U(\vec{v})} > \left(\frac{m}{k}\right)^{k/10}.
    \end{equation*}
\end{corollary}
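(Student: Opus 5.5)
The approach is a double-counting argument combining Lemma~\ref{lem:SsubsetW} with the triangle inequality: the translates $U(\vec{s})$ for $\vec{s}$ in a well-separated set are pairwise disjoint and all have the same size. Fix $\vec{v} \in W_k^m$ and take the set $S \subseteq W_k^m$ from Lemma~\ref{lem:SsubsetW}. It has cardinality $\#S > (m/k)^{k/10}$ and minimal $\ell_p$-separation at least $k^{1/p}$ (for $p=\infty$ read $k^{1/p}=1$). The goal is to show $\#W_k^m \geq \#S \cdot \#U(\vec{v})$, which immediately gives the claim.

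\emph{Step 1 (disjointness).} For distinct $\vec{s},\vec{s}' \in S$ the sets $U(\vec{s})$ and $U(\vec{s}')$ are disjoint. Indeed, a common element $\vec{w}$ would give
\[
\|\vec{s}-\vec{s}'\|_p \leq \|\vec{s}-\vec{w}\|_p + \|\vec{w}-\vec{s}'\|_p < k^{1/p},
\]
which contradicts the separation property of $S$.

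\emph{Step 2 (equal cardinalities).} For any $\vec{s} \in W_k^m$ we have $\#U(\vec{s}) = \#U(\vec{v})$. The set $W_k^m$ is invariant under signed coordinate permutations, i.e.\ maps $\vec{x} \mapsto D P \vec{x}$ with $P$ a permutation matrix and $D$ a diagonal $\pm1$ matrix. Such maps are $\ell_p$-isometries for every $p \in [1,\infty]$, and this group acts transitively on $W_k^m$. So pick $g$ with $g\vec{v} = \vec{s}$; then $g$ restricts to a bijection $U(\vec{v}) \to U(\vec{s})$.

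\emph{Step 3 (conclusion).} Since the sets $U(\vec{s})$, $\vec{s} \in S$, are disjoint subsets of $W_k^m$, we get
\[
\#W_k^m \geq \sum_{\vec{s}\in S} \#U(\vec{s}) = \#S \cdot \#U(\vec{v}) > \left(\frac{m}{k}\right)^{k/10} \#U(\vec{v}).
\]
Dividing by $\#U(\vec{v}) \geq 1$ (it contains $\vec{v}$) yields the claim.

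The only step needing any care is Step~2, and specifically the transitivity of the signed-permutation group on $W_k^m$. It is elementary: first permute the supports onto each other, then fix the signs coordinatewise. The real work is in Lemma~\ref{lem:SsubsetW}, which we take as given. Its hypothesis $m \geq 3k$ is exactly the assumption of the corollary.
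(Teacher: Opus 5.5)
Your proof is correct and follows the paper's argument. Take the separated set $S$ from Lemma~\ref{lem:SsubsetW}, note that the sets $U(\vec{s})$ for $\vec{s}\in S$ are pairwise disjoint subsets of $W_k^m$ of equal size, and rearrange $\#S\cdot\#U(\vec{v})\leq \#W_k^m$. Your Step~2, the transitivity of signed-permutation isometries on $W_k^m$, supplies the justification for the equal-cardinality claim that the paper only asserts.
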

\begin{proof}
    All sets $U(\vec{v})$ for $\vec{v} \in W_k^m$ have the same cardinality.
    Let $S \subset W_k^m$ be a set with minimal $\ell_p$-separation $k^{1/p}$ 
    as in Lemma~\ref{lem:SsubsetW}.
    For this we obtain the following disjoint union:
    \begin{equation*}
        \bigsqcup_{\vec{v} \in S} U(\vec{v}) \subseteq W_m^k.
    \end{equation*}
    Hence, $\# S \cdot \# U(\vec{v}) \leq \#W_m^k$ for any $\vec{v} \in W$.
    Rearranging for $\# S$ and using Lemma~\ref{lem:SsubsetW} proves the assertion.
\end{proof}

\begin{remark}
    For $p=\infty$, the corollary above simplifies with $\# U_{\vec{v}} = 1$,
    hence, only an estimate for the cardinality of $W_k^m$ is required.
    Classical bounds give
    \begin{equation*}
        \left(\frac{2m}{k}\right)^k 
            \leq \#W_k^m = \binom{m}{k} 2^k 
            \leq \left(\frac{2em}{k}\right)^k.
    \end{equation*}
\end{remark}

\bibliographystyle{amsplain}

\bibliography{lit}

@book{adler2009random,
  author={R.J.~Adler and J.E.~Taylor},
  title={{R}andom {F}ields and {G}eometry},
  series={Springer Monographs in Mathematics},
  year={2009},
  publisher={Springer New York}
}

@Article{ BIPW10,
        author = "Khanh Do Ba and P.~Indyk and E.~Price and D.P.~Woodruff",
        title = "Lower Bounds for Sparse Recovery",
        journal = "Proceedings of the twenty-fifth annual {ACM-SIAM} symposium on {D}iscrete {A}lgorithms",
        pages = "1190-1197",
        year = "2010"}

@Article{ H92,
        author = "S.~Heinrich",
        title = "Lower bounds for the complexity of {M}onte {C}arlo function approximation ",
        journal = "J.~Complexity", 
        volume = "8",
        issue = "3",
        pages = "277-300",
        year = "1992"
}

@Article{ Ku17,
        author = "R.J.~Kunsch",
        title = "High-Dimensional Function Approximation: Breaking the Curse with {M}onte {C}arlo Methods",
        journal = "Dissertation, FSU Jena, available on arXiv:1704.08213 [math.NA]",
        year = "2017"
}

@Article{ KNW24,
        author = "R.J.~Kunsch and E.~Novak and M.~Wnuk",
        title = "Randomized approximation of summable sequences -- adaptive and non-adaptive",
        journal = "J.~Approximation",
        volume = "304",
        pages = "106056",
        year = "2024"
}

@Article{ KW24b,
    author = "R.J.~Kunsch and M.~Wnuk",
    title = "Uniform approximation of vectors using adaptive
randomized information",
    journal = {J.~Approximation},
    volume = {313},
    pages = {106216},
    year = {2026},
    sortkey = {KW24b},
}

@Article{ KW24c,
    author = "R.J.~Kunsch and M.~Wnuk",
    title = "Adaptive and non-adaptive randomized approximation of high-dimensional vectors",
    journal = {J.~Approximation},
    volume = {318},
    pages = {106318},
    year = {2026},
    sortkey = {KW24c},
}

@book{vL98,
    author = "J.H.~van~Lint",
    title = "Introduction to coding theory",
    publisher = "Springer",
    year = 1998
}

@Article{ Ma91,
    author = "P.~Math\'e", 
    title = "Random approximation of {S}obolev embeddings", 
    journal = "J.~Complexity", 
    volume = "7", 
    pages = "261-281",
    year = "1991"
}

@InProceedings{Pis86,
    author="G.~Pisier",
    title="Probabilistic methods in the geometry of {B}anach spaces",
    booktitle="Probability and Analysis",
    year="1986",
    publisher="Springer Berlin Heidelberg",
    pages="167--241",
}

@inproceedings{PW12,
  title={Lower bounds for adaptive sparse recovery},
  author={E.~Price and D.~Woodruff},
  booktitle={Proceedings of the twenty-fourth annual ACM-SIAM symposium on Discrete algorithms},
  pages={652--663},
  year={2013},
  organization={SIAM}
}

\end{document}